\title{On uniqueness of $\mathbb{P}$-twists}
\author{Rina Anno}
\email{ranno@math.ksu.edu}
\address{Department of Mathematics \\
Kansas State University \\
138 Cardwell Hall \\
Manhattan, KS 66506\\
USA}
\author{Timothy Logvinenko} 
\email{LogvinenkoT@cardiff.ac.uk} 
\address{School of Mathematics\\ 
Cardiff University\\
Senghennydd Road,\\
Cardiff, CF24 4AG\\
UK}
\DeclareMathOperator{\krn}{Ker}
\DeclareMathOperator{\img}{Im}
\DeclareMathOperator{\homm}{Hom}
\DeclareMathOperator{\eend}{End}
\DeclareMathOperator{\picr}{Pic}
\DeclareMathOperator{\cl}{Cl}
\DeclareMathOperator{\ext}{Ext}
\DeclareMathOperator{\trace}{tr}
\DeclareMathOperator{\action}{act}
\DeclareMathOperator{\modd}{\bf Mod}
\DeclareMathOperator{\lder}{\bf L}
\DeclareMathOperator{\ldertimes}{\overset{\lder}{\otimes}}
\DeclareMathOperator{\id}{Id}
\DeclareMathOperator{\cone}{Cone}
\DeclareMathOperator{\opp}{{opp}}
\DeclareMathOperator{\fg}{{\it fg}}
\DeclareMathOperator{\qrep}{\it \mathcal{Q}r}
\DeclareMathOperator{\hproj}{\mathcal{P}}
\DeclareMathOperator{\semifree}{\mathcal{S}\mathcal{F}}
\DeclareMathOperator{\sffg}{\mathcal{S}\mathcal{F}_{\fg}}
\DeclareMathOperator{\perf}{{\it \mathcal{P}erf}}
\DeclareMathOperator{\hmtpy}{{Ho}}
\DeclareMathOperator{\tria}{{Tria}}
\DeclareMathOperator{\pretriag}{{Pre\text{-}Tr}}
\DeclareMathOperator{\TPair}{{\bf TPair}}
\DeclareMathOperator{\alg}{{\bf Alg}}
\begin{document}

\def\bv{\mathbf{v}}
\def\kgc_{K^*_G(\mathbb{C}^n)}
\def\kgchi_{K^*_\chi(\mathbb{C}^n)}
\def\kgcf_{K_G(\mathbb{C}^n)}
\def\kgchif_{K_\chi(\mathbb{C}^n)}
\def\gpic_{G\text{-}\picr}
\def\gcl_{G\text{-}\cl}
\def\trch_{{\chi_{0}}}
\def\regring{{R}}
\def\regrep{{V_{\text{reg}}}}
\def\givrep{{V_{\text{giv}}}}
\def\lbar{{(\mathbb{Z}^n)^\vee}}
\def\genpx_{{p_X}}
\def\genpy_{{p_Y}}
\def\genpcn_{p_{\mathbb{C}^n}}
\def\gnat{gnat}
\def\twalg{{\regring \rtimes G}}
\def\L{{\mathcal{L}}}
\def\O{{\mathcal{O}}}
\def\gcd{\mbox{gcd}}
\def\lcm{\mbox{lcm}}
\def\tf{{\tilde{f}}}
\def\tD{{\tilde{D}}}
\def\A{{\mathcal{A}}}
\def\B{{\mathcal{B}}}
\def\C{{\mathcal{C}}}
\def\D{{\mathcal{D}}}
\def\F{{\mathcal{F}}}
\def\H{{\mathcal{H}}}
\def\L{{\mathcal{L}}}
\def\R{{\mathcal{R}}}
\def\barA{{\bar{\mathcal{A}}}}
\def\barAi{{\bar{\mathcal{A}}_1}}
\def\barAj{{\bar{\mathcal{A}}_2}}
\def\barB{{\bar{\mathcal{B}}}}
\def\barC{{\bar{\mathcal{C}}}}
\def\M{{\mathcal{M}}}
\def\Aopp{{\A^{\opp}}}
\def\Bopp{{\B^{\opp}}}
\def\Copp{{\C^{\opp}}}
\def\aA{\leftidx{_{a}}{\A}}
\def\bA{\leftidx{_{b}}{\A}}
\def\Aa{{\A_a}}
\def\Ea{E_a}
\def\aE{\leftidx{_{a}}{E}{}}
\def\Eb{E_b}
\def\bE{\leftidx{_{b}}{E}{}}
\def\Fa{F_a}
\def\aF{\leftidx{_{a}}{F}{}}
\def\Fb{F_b}
\def\bF{\leftidx{_{b}}{F}{}}
\def\aM{\leftidx{_{a}}{M}{}}
\def\aMb{\leftidx{_{a}}{M}{_{b}}}
\def\Ma{{M_a}}
\def\modk{{\modd\text{-}k}}
\def\modA{{\modd\text{-}\A}}
\def\modbar{{\overline{\modd}}}
\def\modbarA{{\overline{\modd}\text{-}\A}}
\def\modbarAopp{{\overline{\modd}\text{-}\Aopp}}
\def\modB{{\modd\text{-}\B}}
\def\modC{{\modd\text{-}\C}}
\def\modbarB{{\overline{\modd}\text{-}\B}}
\def\modbarBopp{{\overline{\modd}\text{-}\Bopp}}
\def\AmodA{{\A\text{-}\modd\text{-}\A}}
\def\AmodB{{\A\text{-}\modd\text{-}\B}}
\def\BmodB{{\B\text{-}\modd\text{-}\B}}
\def\BmodA{{\B\text{-}\modd\text{-}\A}}
\def\AmodbarA{\A\text{-}{\overline{\modd}\text{-}\A}}
\def\AmodbarB{\A\text{-}{\overline{\modd}\text{-}\B}}
\def\AmodbarC{\A\text{-}{\overline{\modd}\text{-}\C}}
\def\AmodbarD{\A\text{-}{\overline{\modd}\text{-}\D}}
\def\BmodbarA{\B\text{-}{\overline{\modd}\text{-}\A}}
\def\BmodbarB{\B\text{-}{\overline{\modd}\text{-}\B}}
\def\BmodbarC{\B\text{-}{\overline{\modd}\text{-}\C}}
\def\BmodbarD{\B\text{-}{\overline{\modd}\text{-}\D}}
\def\CmodbarA{\C\text{-}{\overline{\modd}\text{-}\A}}
\def\CmodbarB{\C\text{-}{\overline{\modd}\text{-}\B}}
\def\CmodbarC{\C\text{-}{\overline{\modd}\text{-}\C}}
\def\CmodbarD{\C\text{-}{\overline{\modd}\text{-}\D}}
\def\DmodbarA{\D\text{-}{\overline{\modd}\text{-}\A}}
\def\DmodbarB{\D\text{-}{\overline{\modd}\text{-}\B}}
\def\DmodbarC{\D\text{-}{\overline{\modd}\text{-}\C}}
\def\DmodbarD{\D\text{-}{\overline{\modd}\text{-}\D}}
\def\sfA{{\semifree(\A)}}
\def\sfB{{\semifree(\B)}}
\def\sffgA{{\sffg(\A)}}
\def\sffgB{{\sffg(\B)}}
\def\hprojA{{\hproj(\A)}}
\def\hprojB{{\hproj(\B)}}
\def\qrepA{{\qrep(\A)}}
\def\qrepB{{\qrep(\B)}}
\def\opp{{\text{opp}}}
\def\perfsf{{\semifree^{\perf}}}
\def\prfhpr{{\hproj^{\scriptscriptstyle\perf}}}
\def\prfhprA{{\prfhpr(\A)}}
\def\prfhprB{{\prfhpr(\B)}}
\def\prfhprAopp{{\prfhpr(\Aopp)}}
\def\prfhprBopp{{\prfhpr(\Bopp)}}
\def\perfsfA{{\perfsf(\A)}}
\def\perfsfB{{\perfsf(\B)}}
\def\qrhpr{{\hproj^{qr}}}
\def\qrhprA{{\qrhpr(\A)}}
\def\qrhprB{{\qrhpr(\B)}}
\def\qrsf{{\semifree^{qr}}}
\def\qrsf{{\semifree^{qr}}}
\def\qrsfA{{\qrsf(\A)}}
\def\qrsfB{{\qrsf(\B)}}
\def\Aperfsf{{\semifree^{\A\text{-}\perf}(\AbimB)}}
\def\Bperfsf{{\semifree^{\B\text{-}\perf}(\AbimB)}}
\def\Aprfhpr{{\hproj^{\A\text{-}\perf}(\AbimB)}}
\def\Bprfhpr{{\hproj^{\B\text{-}\perf}(\AbimB)}}
\def\Aqrhpr{{\hproj^{\A\text{-}qr}(\AbimB)}}
\def\Bqrhpr{{\hproj^{\B\text{-}qr}(\AbimB)}}
\def\Aqrsf{{\semifree^{\A\text{-}qr}(\AbimB)}}
\def\Bqrsf{{\semifree^{\B\text{-}qr}(\AbimB)}}
\def\modAopp{{\modd\text{-}\Aopp}}
\def\modBopp{{\modd\text{-}\Bopp}}
\def\AmodA{{\A\text{-}\modd\text{-}\A}}
\def\AmodB{{\A\text{-}\modd\text{-}\B}}
\def\AmodC{{\A\text{-}\modd\text{-}\C}}
\def\BmodA{{\B\text{-}\modd\text{-}\A}}
\def\BmodB{{\B\text{-}\modd\text{-}\B}}
\def\BmodC{{\B\text{-}\modd\text{-}\C}}
\def\CmodA{{\C\text{-}\modd\text{-}\A}}
\def\CmodB{{\C\text{-}\modd\text{-}\B}}
\def\CmodC{{\C\text{-}\modd\text{-}\C}}
\def\AbimA{{\A\text{-}\A}}
\def\AbimC{{\A\text{-}\C}}
\def\BbimA{{\B\text{-}\A}}
\def\BbimB{{\B\text{-}\B}}
\def\BbimC{{\B\text{-}\C}}
\def\CbimA{{\C\text{-}\A}}
\def\CbimB{{\C\text{-}\B}}
\def\CbimC{{\C\text{-}\C}}
\def\AhprA{{\hproj\left(\AbimA\right)}}
\def\BhprB{{\hproj\left(\BbimB\right)}}
\def\AhprB{{\hproj\left(\AbimB\right)}}
\def\BhprA{{\hproj\left(\BbimA\right)}}
\def\AbarA{{\overline{\A\text{-}\A}}}
\def\AbarB{{\overline{\A\text{-}\B}}}
\def\BbarA{{\overline{\B\text{-}\A}}}
\def\BbarB{{\overline{\B\text{-}\B}}}
\def\QAbimB{{Q\A\text{-}\B}}
\def\AbimB{{\A\text{-}\B}}
\def\AonebimB{{\A_1\text{-}\B}}
\def\AtwobimB{{\A_2\text{-}\B}}
\def\BbimA{{\B\text{-}\A}}
\def\Aperf{{\A\text{-}\perf}}
\def\Bperf{{\B\text{-}\perf}}
\def\MddA{{M^{\tilde{\A}}}}
\def\MddB{{M^{\tilde{\B}}}}
\def\MhdA{{M^{h\A}}}
\def\MhdB{{M^{h\B}}}
\def\NhdB{{N^{h\B}}}
\def\Cat{{Cat}}
\def\DGCat{{DG\text{-}Cat}}
\def\HoDGCat{{\hmtpy(\DGCat)}}
\def\HoDGCatV{{\hmtpy(\DGCat_\mathbb{V})}}
\def\tr{{tr}}
\def\pretr{{pretr}}
\def\kctr{{kctr}}
\def\PreTrCat{{\DGCat^\pretr}}
\def\KcTrCat{{\DGCat^\kctr}}
\def\HoPretrCat{{\hmtpy(\PreTrCat)}}
\def\HoKcTrCat{{\hmtpy(\KcTrCat)}}
\def\Aquasirep{{\A\text{-}qr}}
\def\QAquasirep{{Q\A\text{-}qr}}
\def\Bquasirep{{\B\text{-}qr}} 
\def\lderA{{\tilde{\A}}} 
\def\lderB{{\tilde{\B}}} 
\def\adjunit{{\text{adj.unit}}}
\def\adjcounit{{\text{adj.counit}}}
\def\degzero{{\text{deg.0}}}
\def\degone{{\text{deg.1}}}
\def\degminusone{{\text{deg.-$1$}}}
\def\bareta{{\overline{\eta}}}
\def\barzeta{{\overline{\zeta}}}
\def\Ract{{R {\action}}}
\def\barRact{{\overline{\Ract}}}
\def\actL{{{\action} L}}
\def\baractL{{\overline{\actL}}}
\def\Ainfty{{A_{\infty}}}
\def\noddinf{{{\bf Nod}_{\infty}}}
\def\noddinfstr{{{\bf Nod}^{\text{strict}}_{\infty}}}
\def\noddinfA{{\noddinf\A}}
\def\noddinfB{{\noddinf\B}}
\def\noddinfAB{{\noddinf\AbimB}}
\def\noddinfBA{{\noddinf\BbimA}}
\def\noddinfu{{({\bf Nod}_{\infty})_u}}
\def\noddinfuA{{(\noddinfA)_u}}
\def\noddinfhu{{({\bf Nod}_{\infty})_{hu}}}
\def\noddinfhuA{{(\noddinfA)_{hu}}}
\def\noddinfdg{{({\bf Nod}_{\infty})_{dg}}}
\def\noddinfdgA{{(\noddinfA)_{dg}}}
\def\noddinfdgAA{{(\noddinf\AbimA)_{dg}}}
\def\noddinfdgAB{{(\noddinf\AbimB)_{dg}}}
\def\noddinfdgB{{(\noddinfB)_{dg}}}
\def\moddinf{{\modd_{\infty}}}
\def\moddinfA{{\modd_{\infty}\A}}
\def\infbar{{B_\infty}}
\def\infbarA{{B^\A_\infty}}
\def\infbarB{{B^\B_\infty}}
\def\infbarC{{B^\C_\infty}}
\def\inftimes{{\overset{\infty}{\otimes}}}
\def\infhom{{\overset{\infty}{\homm}}}
\def\barhom{{\overline{\homm}}}
\def\barend{{\overline{\eend}}}
\def\bartimes{{\;\overline{\otimes}}}
\def\triaA{{\tria \A}}
\def\TPairdg{{\TPair^{dg}}}
\def\algA{{\alg(\A)}}
\def\Ainfty{{A_{\infty}}}

\theoremstyle{definition}
\newtheorem{defn}{Definition}[section]
\newtheorem*{defn*}{Definition}
\newtheorem{exmpl}[defn]{Example}
\newtheorem*{exmpl*}{Example}
\newtheorem{exrc}[defn]{Exercise}
\newtheorem*{exrc*}{Exercise}
\newtheorem*{chk*}{Check}
\newtheorem*{remarks*}{Remarks}
\theoremstyle{plain}
\newtheorem{theorem}{Theorem}[section]
\newtheorem*{theorem*}{Theorem}
\newtheorem{conj}[defn]{Conjecture}
\newtheorem*{conj*}{Conjecture}
\newtheorem{prps}[defn]{Proposition}
\newtheorem*{prps*}{Proposition}
\newtheorem{cor}[defn]{Corollary}
\newtheorem*{cor*}{Corollary}
\newtheorem{lemma}[defn]{Lemma}
\newtheorem*{claim*}{Claim}
\newtheorem{Specialthm}{Theorem}
\renewcommand\theSpecialthm{\Alph{Specialthm}}
\numberwithin{equation}{section}
\renewcommand{\textfraction}{0.001}
\renewcommand{\topfraction}{0.999}
\renewcommand{\bottomfraction}{0.999}
\renewcommand{\floatpagefraction}{0.9}
\setlength{\textfloatsep}{5pt}
\setlength{\floatsep}{0pt}
\setlength{\abovecaptionskip}{2pt}
\setlength{\belowcaptionskip}{2pt}
\begin{abstract}
We prove that for any $\mathbb{P}^n$-functor all the convolutions 
(double cones) of the three-term complex 
$FHR \xrightarrow{\psi} FR \xrightarrow{\trace} \id$ 
defining its $\mathbb{P}$-twist are isomorphic. We also introduce 
a new notion of a non-split $\mathbb{P}^n$-functor. 
\end{abstract}

\maketitle

\section{Introduction}

A \em $\mathbb{P}^n$-object \rm $E$ in the derived category $D(X)$ of a smooth projective
variety $X$ has $\ext^*_{X}(E,E) \simeq H^*(\mathbb{P}^n)$
as graded rings and $E \otimes \omega_X \simeq E$. These were introduced by Huybrechts and Thomas
in \cite{HuybrechtsThomas-PnObjectsAndAutoequivalencesOfDerivedCategories}
as mirror symmetric analogues of Lagrangian $\mathbb{C}\mathbb{P}^n$s
in a Calabi Yau manifold. Moreover, there is an analogue of the Dehn twist: the 
\em $\mathbb{P}$-twist \rm $P_E$ about $E$ is the Fourier-Mukai
transform defined by a certain convolution (double cone) of the three term 
complex
\begin{align}
\label{eqn-intro-Pn-object-three-term-complex}	
E^\vee \boxtimes E [-2] \xrightarrow{h^\vee \otimes \id - \id \otimes h} 
E^\vee \boxtimes E \xrightarrow{\trace} O_{\Delta}
\end{align}
where $h$ is the degree $2$ generator of  $\ext^*_{X}(E,E)$.
It was shown in 
\cite{HuybrechtsThomas-PnObjectsAndAutoequivalencesOfDerivedCategories}
to be an auto-equivalence of $D(X)$.

A \em convolution \rm of a three term complex in a triangulated category
$\mathcal{D}$
\begin{align}
A \xrightarrow{f} B \xrightarrow{g} C	
\end{align}
is any object obtained via one of the following two constructions. 
A \em left Postnikov system \rm is where we first take the cone $Y$ of
$f$, then lift $g$ to a morphism $m\colon Y \rightarrow C$, and 
take the cone of $m$.  A \em right Postnikov system \rm is where 
we first take cone $X$ of $g$, then lift $f$ to a morphism 
$j \colon A[1] \rightarrow X$, and take a cone of $j$. 
\begin{equation*}
\begin{tikzcd}[column sep={0.866cm,between origins}, row sep={1cm,between origins}]
A
\ar{rr}{f}
& &
B
\ar{rr}{g}
\ar{dl}
\ar[phantom]{dll}[description, pos=0.475]{\star}
& &
C
\\
~
& 
\ar[dashed]{ul}
Y
\ar{urrr}[']{m}
&
&
&
\end{tikzcd}
\quad \quad \quad 
\begin{tikzcd}[column sep={0.866cm,between origins}, row sep={1cm,between origins}]
A
\ar{rr}{f}
\ar[dotted]{drrr}[']{j}
& &
B
\ar{rr}{g}
\ar[phantom]{drr}[description, pos=0.45]{\star}
& &
C
\ar{dl}
\\
& &
&
X
\ar[dashed]{ul}
&
~
\end{tikzcd}
\end{equation*}
Apriori, convolutions are not unique. For example, the convolutions
of $A[-2] \rightarrow 0 \rightarrow B$ are  
the extensions of $A$ by $B$ in $\mathcal{D}$. If $\mathcal{D}$
admits a DG-enhancement $\mathcal{C}$, the convolutions of a complex 
in $\mathcal{D}$ up to isomorphism are in bijection with 
the \em twisted complex \rm structures on it in $\mathcal{C}$ up to
homotopy equivalence, cf. \S\ref{section-twisted-complexes}. 

In \cite[Lemma
2.1]{HuybrechtsThomas-PnObjectsAndAutoequivalencesOfDerivedCategories}
it was shown that 
the complex \eqref{eqn-intro-Pn-object-three-term-complex} has 
a unique left Postnikov system and defined the $\mathbb{P}$-twist 
to be its convolution. Later Addington noted in
\cite{Addington-NewDerivedSymmetriesOfSomeHyperkaehlerVarieties}
that 
$$\homm^{-1}_{D(X\times X)}(E^\vee \boxtimes E [-2], \mathcal{O}_\Delta) 
\simeq \homm^1_X(E,E) = 0$$ which by a simple homological argument 
implies that the complex \eqref{eqn-intro-Pn-object-three-term-complex} has 
a unique convolution. See Lemma 
\ref{lemma-uniqueness-of-convolutions-for-a-two-step-complex} of this
note and the first remark after it. 

In \cite{Addington-NewDerivedSymmetriesOfSomeHyperkaehlerVarieties}
and \cite{Cautis-FlopsAndAboutAGuide} Addington and Cautis introduced
the notion of a \em (split) $\mathbb{P}^n$-functor \rm to  
generalise $\mathbb{P}^n$-objects in a similar way to 
spherical functors \cite{AnnoLogvinenko-SphericalDGFunctors} 
generalising spherical objects 
\cite{SeidelThomas-BraidGroupActionsOnDerivedCategoriesOfCoherentSheaves}. 
It was a brilliant idea and numerous applications followed
\cite{Krug-NewDerivedAutoequivalencesOfHilbertSchemesAndGeneralisedKummerVarieties}\cite{Krug-P-functorVersionsOfTheNakajimaOperators}
\cite{AddingtonDonovanMeachan-ModuliSpacesOfTorsionSheavesOnK3SurfacesAndDerivedEquivalences}\cite{AddingtonDonovanMeachan-MukaiFlopsAndPTwists}. 

For $Z$ and $X$ smooth projective varieties  
a \em split $\mathbb{P}^n$-functor \rm is a Fourier-Mukai functor 
$F\colon D(Z) \rightarrow D(X)$ with left and right Fourier-Mukai 
adjoints $L,R$ such that for some autoequivalence $H$ of $D(Z)$  
we have an isomorphism
\begin{align}
\label{eqn-intro-RF-for-split-Pn-functor}
RF \simeq H^n \oplus H^{n-1} \oplus \dots \oplus H \oplus \id 
\end{align}
satisfying the \em monad condition \rm
and the \em adjoints condition \rm which
generalise the 
$\mathbb{P}$-object requirements of 
$\ext^*_{X}(E,E) \simeq H^*(\mathbb{P}^n)$ respecting
the graded ring structure 
and of $E \simeq E \otimes \omega_X$. 
The \em $\mathbb{P}$-twist \rm about $F$ is then the convolution of 
a certain canonical right Postnikov system of the three-term complex
\begin{align}
\label{eqn-intro-Pn-functor-three-term-complex}
FHR \xrightarrow{\psi} FR \xrightarrow{\trace} \id 
\end{align}
where $\trace$ is the adjunction co-unit and $\psi$ the corresponding
component of the map $FRFR \xrightarrow{FR\trace - \trace FR} FR$ 
after the identification 
\eqref{eqn-intro-RF-for-split-Pn-functor}. 

Addington noted in
\cite[\S4.3]{Addington-NewDerivedSymmetriesOfSomeHyperkaehlerVarieties}
that Postnikov systems for \eqref{eqn-intro-Pn-object-three-term-complex}
are not necessarily unique. This caused many
technical difficulties in applications. They were aggravated
by the fact that it was sometimes simpler to calculate 
left Postnikov systems associated to 
\eqref{eqn-intro-Pn-object-three-term-complex}.  
In a word, it was often easy to compute some convolution of 
\eqref{eqn-intro-Pn-object-three-term-complex} but difficult 
to prove that it was indeed the $\mathbb{P}$-twist defined in 
\cite{Addington-NewDerivedSymmetriesOfSomeHyperkaehlerVarieties}. 

The main result of this paper is that contrary to the expectations
of specialists, including  the authors of this paper, 
the three term complex \eqref{eqn-intro-Pn-object-three-term-complex}
has a unique convolution. Thus we can compute the $\mathbb{P}$-twist
via any Postnikov system, taking cones in any order and using
any lifts. To prove this we prove a more general fact: 
\begin{theorem*}
[see Theorem \ref{theorem-uniqueness-of-FG-FR-Id-convolutions-triangulated-version}]
Let $Z,X$ be separated schemes of finite type over a field. 
Let $F\colon D(Z) \rightarrow D(X)$ be an exact functor  
with a right adjoint $R$. Let $\trace\colon 
FR \rightarrow \id_X$ be the adjunction co-unit. 
Let $G\colon D(X) \rightarrow D(Z)$ be any exact functor and 
$f\colon FG \rightarrow FR$ any natural transformation with $\trace \circ f = 0$. 
Finally, assume these are all Fourier-Mukai functors and natural 
transformations thereof. 

Then all convolutions of the following three-term complex are isomorphic:
\begin{align}
\label{eqn-intro-FG-FR-Id-complex}
FG \xrightarrow{f} FR \xrightarrow{\trace} \id_X. 
\end{align}
\end{theorem*}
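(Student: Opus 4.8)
The plan is to reduce the theorem to a single surjectivity statement on $\homm^{-1}$, and then prove that surjectivity using the adjunction $F\dashv R$. Throughout one works in a DG enhancement $\mathcal{C}$ of the category of Fourier--Mukai kernels (equivalently, of DG bimodules), which simultaneously enhances $D(X\times X)$, $D(X\times Z)$ and $D(Z\times Z)$, and in which composition of Fourier--Mukai functors, the identity functors, the adjunction unit $\eta\colon\id_Z\to RF$ and co-unit $\trace$, and the map $f$ are all given by honest objects and closed degree-$0$ morphisms; the hypothesis that everything in sight is Fourier--Mukai is what provides this, and in particular it makes the whole adjunction $F\dashv R$, unit included, Fourier--Mukai. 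By the formalism of \S\ref{section-twisted-complexes}, isomorphism classes of convolutions of $FG\xrightarrow{f}FR\xrightarrow{\trace}\id_X$ are the homotopy classes of twisted-complex structures on this three-term complex in $\mathcal{C}$; since $\trace\circ f=0$, at least one such structure exists, and --- by the same bookkeeping of three-term twisted complexes that underlies Lemma \ref{lemma-uniqueness-of-convolutions-for-a-two-step-complex} and the first remark after it --- the set of them up to homotopy is a torsor over a quotient of $\homm^{-1}_{D(X\times X)}(FG,\id_X)$ by a subgroup containing the images of post-composition with $\trace$,
\[
\trace\circ(-)\colon\ \homm^{-1}_{D(X\times X)}(FG,FR)\longrightarrow\homm^{-1}_{D(X\times X)}(FG,\id_X),
\]
and of pre-composition with $f$. (Addington's observation for $\mathbb{P}^n$-objects is the case in which $\homm^{-1}_{D(X\times X)}(FG,\id_X)$ already vanishes.) Hence it is enough to show that the displayed map $\trace\circ(-)$ is surjective.

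To see this, note that post-composition with $F$, as an operation on Fourier--Mukai functors, is left adjoint to post-composition with $R$, with unit and co-unit induced by $\eta$ and $\trace$; being assembled from Fourier--Mukai data, this adjunction again lives in $\mathcal{C}$. It supplies, for every Fourier--Mukai endofunctor $K$ of $D(X)$, a natural isomorphism $\homm^{\bullet}_{D(X\times X)}(FG,K)\cong\homm^{\bullet}_{D(X\times Z)}(G,RK)$. Applying naturality to $\trace\colon FR\to\id_X$, i.e.\ to $K=FR$ and $K=\id_X$, identifies the map $\trace\circ(-)$ above with
\[
(R\trace)\circ(-)\colon\ \homm^{-1}_{D(X\times Z)}(G,RFR)\longrightarrow\homm^{-1}_{D(X\times Z)}(G,R).
\]
But $R\trace\colon RFR\to R$ is split by $\eta R\colon R\to RFR$ by the triangle identity for $F\dashv R$, and both morphisms lie in $\mathcal{C}$; so $(R\trace)\circ(-)$ is a split surjection, with splitting $(\eta R)\circ(-)$. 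Therefore $\trace\circ(-)$ is surjective, and all convolutions of \eqref{eqn-intro-FG-FR-Id-complex} are isomorphic.

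The main obstacle, I expect, is the first step: upgrading ``the convolution is unique when $\homm^{-1}(A,C)=0$'' to ``the convolution is unique once $\homm^{-1}(A,C)$ is exhausted by the gauge action of post-composition with $g$ and pre-composition with $f$'' requires following the gauge transformations of a three-term twisted complex carefully, rather than merely invoking Addington's vanishing argument. A secondary point requiring care is the claim that the post-composition adjunction and its splitting $\eta R$ genuinely live in $\mathcal{C}$: over arbitrary separated finite-type schemes this is precisely where one must use that $F$, $R$, $G$, $\trace$ and $f$ are Fourier--Mukai and that Fourier--Mukai adjunctions are modelled by bimodules. Everything else is formal.
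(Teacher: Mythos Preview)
Your proposal is correct, and its heart --- reducing to surjectivity of
\[
\trace\circ(-)\colon \homm^{-1}_{D(X\times X)}(FG,FR)\longrightarrow\homm^{-1}_{D(X\times X)}(FG,\id_X)
\]
and then proving that surjectivity via the adjunction isomorphism and the splitting $\eta R$ of $R\trace$ --- is exactly the paper's argument. The only real difference is in the reduction step you flag as your ``main obstacle'': you frame it in DG terms as a torsor computation on twisted-complex structures, whereas the paper handles it entirely in the triangulated category via Lemma~\ref{lemma-left-Postnikov-system-for-each-right-Postnikov-system-and-vice-versa} (every left Postnikov system has a right one with isomorphic convolution) together with Lemma~\ref{lemma-uniqueness-of-convolutions-for-a-two-step-complex} (surjectivity of $g\circ(-)$ on $\homm^{-1}$ already forces all right Postnikov systems to have isomorphic convolutions, by a direct long-exact-sequence argument on $\homm^\bullet(A,-)$ applied to the cone triangle of $g$). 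This sidesteps the gauge-transformation bookkeeping you were worried about and keeps the proof of Theorem~\ref{theorem-uniqueness-of-FG-FR-Id-convolutions-triangulated-version} purely triangulated, with no DG enhancement needed at all. The DG torsor picture you sketch is closer in spirit to the paper's separate proof in \S\ref{section-an-approach-via-DG-enhancements}, where the homotopy equivalence between any two twisted-complex lifts of the three-term complex is written down explicitly rather than deduced from a surjectivity criterion.
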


Our proof shows that
the complex \eqref{eqn-intro-FG-FR-Id-complex} has a unique right Postnikov 
system. We then prove in Lemma
\ref{lemma-left-Postnikov-system-for-each-right-Postnikov-system-and-vice-versa}
that for any left Postnikov system there exists a right 
Postnikov system with the same convolution, and vice versa. 
We give these proofs purely in the language of triangulated
categories. 

In a DG-enhanced setting we can work more generally and 
give a more direct proof. 
In Prop.~\ref{prps-any-two-lifts-of-XM-NM-B-are-isomorphic}
we construct a homotopy equivalence between any two twisted 
complex structures on \eqref{eqn-intro-FG-FR-Id-complex}. We thus obtain:
\begin{theorem*}[see Theorem
\ref{theorem-uniqueness-of-FG-FR-Id-convolutions-dg-version}]

Let $\A$ and $\B$ be enhanced triangulated categories. Let 
$F\colon \A \rightarrow \B$ be an exact functor with a right adjoint $R$. 
Let $\trace\colon FR \rightarrow \id_\B$ be the adjunction counit. 
Let $G\colon \B \rightarrow \A$ be any exact functor and  
$f\colon FG \rightarrow FR$ any natural transformation 
with $\trace \circ f = 0$. Finally, assume these are all DG-enhanceable.

Then all convolutions of the following three-term complex are isomorphic:
\begin{align}
\label{eqn-intro-ordinary-complex-FG-FR-Id-enhanced-setting}
FG \xrightarrow{f} FR \xrightarrow{\trace} \id_\B. 
\end{align}
\end{theorem*}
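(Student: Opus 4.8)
The plan is to reduce the DG-enhanced statement to the triangulated one already proved, modulo one genuinely new input: that convolutions of \eqref{eqn-intro-ordinary-complex-FG-FR-Id-enhanced-setting} computed in the DG enhancement correspond bijectively (up to isomorphism) to convolutions in the homotopy category $\A$ and $\B$, and hence that uniqueness downstairs forces uniqueness upstairs. So the first step is to invoke the dictionary from \S\ref{section-twisted-complexes}: the set of convolutions of a three-term complex in a triangulated category with DG enhancement $\mathcal{C}$, taken up to isomorphism, is in bijection with the set of twisted-complex structures on it in $\mathcal{C}$ up to homotopy equivalence. Granting this, the DG version follows from the triangulated version \emph{provided} the functors $F$, $R$, $G$ and the transformations $f$, $\trace$ descend to honest exact functors and natural transformations on homotopy categories with the same formal properties — which is exactly the hypothesis that they are DG-enhanceable.

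Concretely, first I would fix DG enhancements $\A$, $\B$ and DG-lifts of $F$, $R$, $G$; these induce exact functors $\bar F, \bar R, \bar G$ between the homotopy categories $H^0(\A)$, $H^0(\B)$, together with an adjunction $\bar F \dashv \bar R$ whose counit is the descent of $\trace$, and a natural transformation $\bar f\colon \bar F\bar G \to \bar F\bar R$ with $\overline{\trace}\circ\bar f = 0$ (the vanishing descends since it holds on the nose in the enhancement). Second, I would observe that $H^0(\A)$ and $H^0(\B)$ are the derived categories of the appropriate geometric objects and that all the data are Fourier–Mukai; this places us precisely in the situation of Theorem~\ref{theorem-uniqueness-of-FG-FR-Id-convolutions-triangulated-version}, which yields that \eqref{eqn-intro-ordinary-complex-FG-FR-Id-enhanced-setting} has, up to isomorphism in $H^0(\B)$, a unique convolution. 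Third, I would transport this back: by the twisted-complex dictionary, two twisted-complex structures on \eqref{eqn-intro-ordinary-complex-FG-FR-Id-enhanced-setting} whose convolutions are isomorphic in $H^0(\B)$ are homotopy equivalent, so all DG-convolutions agree.

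An alternative, and I expect the route the authors actually take (it matches the forward reference to Prop.~\ref{prps-any-two-lifts-of-XM-NM-B-are-isomorphic}), is to argue directly in the enhancement without passing through the triangulated theorem at all: build an explicit homotopy equivalence between any two twisted-complex structures on \eqref{eqn-intro-ordinary-complex-FG-FR-Id-enhanced-setting}. Here one writes a twisted-complex structure as the original differentials $f$, $\trace$ plus higher components — in the three-term case the only extra datum is a degree-$(-1)$ map $FG \to \id_\B$ satisfying the Maurer–Cartan relation $\trace\circ f = d(\text{that map})$ — and then shows the space of such structures is homotopically contractible in the relevant sense, using the adjunction $F\dashv R$ to identify $\homm(FG, \id_\B) \simeq \homm(G, R)$ and the hypothesis $\trace\circ f = 0$ to kill the obstruction. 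The two twisted complexes are then connected by an explicit gauge transformation built from a contracting homotopy.

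The main obstacle, in either approach, is bookkeeping around the DG-enhanceability hypothesis: one must be careful that the adjunction $F\dashv R$, the counit $\trace$, and the relation $\trace\circ f = 0$ all lift to the enhancement compatibly and simultaneously — a priori an adjunction of triangulated functors need not lift to the DG level, and the vanishing $\trace \circ f = 0$ holds only up to homotopy, so one has to choose the nullhomotopy and track it through the twisted-complex combinatorics. Once the right model for this data is set up (which is presumably the content of the lemmas preceding Prop.~\ref{prps-any-two-lifts-of-XM-NM-B-are-isomorphic}), the homotopy equivalence of any two lifts should follow from a direct, if slightly intricate, computation with twisted complexes; no deep vanishing beyond $\homm^{-1}(FG,\id_\B) \simeq \homm^1(G,R)$-type identifications should be needed, exactly as in the two-step case recalled after Lemma~\ref{lemma-uniqueness-of-convolutions-for-a-two-step-complex}.
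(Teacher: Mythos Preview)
Your primary approach---reducing to Theorem~\ref{theorem-uniqueness-of-FG-FR-Id-convolutions-triangulated-version}---does not work as stated: that theorem is formulated only for separated schemes of finite type with Fourier--Mukai data, whereas the present theorem applies to \emph{arbitrary} enhanced triangulated categories $\A$, $\B$. The claim that ``$H^0(\A)$ and $H^0(\B)$ are the derived categories of the appropriate geometric objects and that all the data are Fourier--Mukai'' is simply false in general; the paper is explicit that the DG version is strictly more general. One could rerun the argument of Theorem~\ref{theorem-uniqueness-of-FG-FR-Id-convolutions-triangulated-version} in $D(\BbimB)$---it uses only Lemma~\ref{lemma-uniqueness-of-convolutions-for-a-two-step-complex} and the $2$-categorical adjunction identities---but making sense of that requires exactly the bimodule/homotopy-adjunction setup of \S\ref{section-an-approach-via-DG-enhancements}, so nothing is saved. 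There is also a logical redundancy in your ``transport back'' step: convolutions already live in the homotopy category, so if uniqueness there were established, there would be nothing further to transport.

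Your alternative approach is indeed what the paper does, and you correctly anticipate this. After lifting to homotopy-adjoint bimodules $M,N$ and a bimodule $X$, a twisted-complex structure on \eqref{eqn-intro-ordinary-complex-FG-FR-Id-enhanced-setting} is just a degree $-1$ map $h\colon X\otimes_\A M \to \B$ with $dh = -\trace\circ f$. Given two such, $h_1$ and $h_2$, the paper transports the closed difference $h_1 - h_2$ along the adjunction unit to obtain a closed degree $-1$ map $\xi\colon X\otimes_\A M \to N\otimes_\A M$; this is the exact DG analogue of the map $\psi = FR\phi \circ F\action G$ from the triangulated proof. One then checks $\trace\circ\xi$ agrees with $h_1 - h_2$ up to a homotopy $\eta$, and $(\id,\id,\id)$ together with the off-diagonal entries $\xi$ and $-\eta$ assembles into an explicit isomorphism of twisted complexes (with explicit inverse). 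No $\homm$-vanishing or adjunction-isomorphism of $\homm$-spaces is invoked; the argument is a direct construction, closer to ``write down the gauge transformation'' than to ``show the obstruction space vanishes''. Lemmas~\ref{lemma-the-convolutions-of-induced-Postnikov-systems} and~\ref{lemma-any-Postnikov-system-is-isomorphic-to-an-induced-one} then finish the job.
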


The uniqueness of $\mathbb{P}$-twists as established by these two
theorems removes a significant roadblock in the way of research 
into $\mathbb{P}^n$-functors. Our results were immediately applied 
in a number of papers including
\cite{HocheneggerKrug-FormalityOfPObjects}, 
\cite{KrugMeachan-UniversalFunctorsOnSymmetricQuotientStacksOfAbelianVarieties},
\cite{MeachanRaedschelders-HochschildCohomologyAndDeformationsOfPFunctors}. 

Finally, Addington and Cautis referred to the notion which they
introduced as $\mathbb{P}^n$-functors. The reason it is perhaps
best referred to as \em split \rm $\mathbb{P}^n$-functors is that
the monad $RF$ splits into a direct sum of $\id$ and powers of an
autoequivalence $H$. In the definition of a spherical functor $RF$ 
can be a non-trivial extension of $\id$ by an autoequivalence, and this
is the case in many interesting examples. Indeed, it was later 
noted by Addington, Donovan, and Meachan in \cite[Remark
1.7]{AddingtonDonovanMeachan-MukaiFlopsAndPTwists} that it would 
be nice to allow $RF$ to have a filtration with quotients
$\id, H, \dots, H^n$, however it would then be difficult to formulate
the monad condition and to construct $\mathbb{P}^n$-twist as a
convolution of a three-term complex. 

In \S\ref{section-p-functors} we propose a general notion of 
a (non-split) \em $\mathbb{P}^n$-functor \rm which deals with all 
of these issues. These are the functors $F$ for which $RF$ is 
isomorphic to a repeated extension of $\id$ by $H, \dots, H^n$ of the form
\begin{footnotesize}
\begin{equation*}
\begin{tikzcd}[column sep = {0.35cm}]
\id 
\ar[phantom]{drr}[description, pos=0.45]{\star}
\ar{rr}{\iota_1}
& &
Q_1 
\ar{rr}{\iota_2}
\ar{ld}{\mu_1}
\ar[phantom]{drr}[description, pos=0.45]{\star}
& &
Q_2  
\ar{r}
\ar{ld}{\mu_2}
&
\dots 
\ar{r}
&
Q_{n-2}
\ar[phantom]{drr}[description, pos=0.45]{\star}
\ar{rr}{\iota_{n-1}}
\ar{ld}
& 
& 
Q_{n-1}
\ar{ld}{\mu_{n-1}}
\ar{rr}{\iota_{n}}
& & 
Q_n.
\ar[phantom]{dll}[description, pos=0.45]{\star}
\ar{ld}{\mu_n}
\\
&
H
\ar[dashed]{lu}{\sigma}
& ~ &
H^2 
\ar[dashed]{lu}
& ~ & 
\dots
\ar[dashed]{lu}
&
&
H^{n-1}
\ar[dashed]{lu}
&~& 
H^n
\ar[dashed]{lu}
& 
\end{tikzcd}
\end{equation*}
\end{footnotesize}
This has to satisfy three conditions: the monad condition, the adjoints condition, 
and the highest degree term condition, see \S\ref{section-p-functors}.
In comparison, the definition in
\cite{Addington-NewDerivedSymmetriesOfSomeHyperkaehlerVarieties} only
asks for two conditions. However, in the non-split situation, the
precise analogue of the monad condition in
\cite{Addington-NewDerivedSymmetriesOfSomeHyperkaehlerVarieties} is
complicated to state on the level of triangulated categories. We weaken it to 
the point where it can be easily stated on the triangulated level, but at the price of introducing the highest degree term condition. However, as explained in \S\ref{section-p-functors}, if the non-split analogues of the two conditions in \cite{Addington-NewDerivedSymmetriesOfSomeHyperkaehlerVarieties} hold, they do imply our three conditions. Thus our definition is strictly more general. 

We define the \em $\mathbb{P}$-twist \rm about such $F$ 
to be the unique convolution of the three-term complex 
\begin{align}
\label{eqn-intro-ordinary-complex-FG-FR-Id-nonsplit-setting}
FHR \xrightarrow{\psi} FR \xrightarrow{\trace} \id_\B 
\end{align}
where $\psi$ is again the corresponding component of 
$FR\trace - \trace FR$ after appropriate
identifications. The uniqueness follows by  
Theorem \ref{theorem-uniqueness-of-FG-FR-Id-convolutions-dg-version}
of this paper. In \cite{AnnoLogvinenko-PFunctors} 
we show that this $\mathbb{P}$-twist is indeed an autoequivalence
and give examples of non-split $\mathbb{P}^n$-functors.   

On the structure of this paper. 
In \S\ref{section-postnikov-systems-and-convolutions} 
and
\S\ref{section-twisted-complexes}
we give preliminaries on Postnikov systems and 
on twisted complexes, respectively. In \S\ref{section-p-functors}
we give the definition of a (non-split) $\mathbb{P}^n$-functor. Then
in \S\ref{section-an-approach-via-triangulated-categories}
and 
\S\ref{section-an-approach-via-DG-enhancements} we prove
our main results via triangulated and DG-categorical techniques,
respectively. Those only interested in the triangulated approach
should read \S\ref{section-postnikov-systems-and-convolutions},
\S\ref{section-an-approach-via-triangulated-categories}, 
and, possibly, \S\ref{section-p-functors}. 

\em Acknowledgements: \rm We would like to thank Alexei Bondal 
and Mikhail Kapranov for introducing the notions of 
a DG-enhancement and a twisted complex
in \cite{BondalKapranov-EnhancedTriangulatedCategories}. 

\section{Preliminaries}
\subsection{Postnikov systems and convolutions}
\label{section-postnikov-systems-and-convolutions}

Let $\mathcal{D}$ be a triangulated category and let 
\begin{align}
\label{eqn-A-B-C-complex}
A \xrightarrow{f} B \xrightarrow{g} C 
\end{align}
be a complex of objects of $\mathcal{D}$, that is $g \circ f = 0$. 

A \em right Postnikov system \rm associated to the complex 
\eqref{eqn-A-B-C-complex} is a diagram 
\begin{equation}
\label{eqn-right-Postnikov-system-on-A-B-C}
\begin{tikzcd}[column sep={1.155cm,between origins}, row sep={1.333cm,between origins}]
A
\ar{rr}{f}
\ar[dotted]{drrr}[']{j}
& &
B
\ar{rr}{g}
\ar[phantom]{drr}[description, pos=0.45]{\star}
& &
C
\ar{dl}{h}
\\
& &
&
X
\ar[dashed]{ul}[']{i}
&
~
\end{tikzcd}
\end{equation}
where the starred triangle is exact and the other triangle is
commutative. The dashed and dotted arrows denote maps of degree $1$
and $-1$ respectively. The \em convolution \rm of 
\eqref{eqn-right-Postnikov-system-on-A-B-C}
is the cone of the map $A[1] \xrightarrow{j} X$.

Similarly, a \em left Postnikov system \rm associated to the complex    
\eqref{eqn-A-B-C-complex} is a diagram 
\begin{equation}
\label{eqn-left-Postnikov-system-on-A-B-C}
\begin{tikzcd}[column sep={1.155cm,between origins}, row sep={1.333cm,between origins}]
A
\ar{rr}{f}
& &
B
\ar{rr}{g}
\ar{dl}[']{k}
\ar[phantom]{dll}[description, pos=0.475]{\star}
& &
C
\\
~
& 
\ar[dashed]{ul}{l}
Y
\ar{urrr}[']{m}
&
&
&
\end{tikzcd}
\end{equation}
Its \em convolution \rm is the cone of the map $Y \xrightarrow{m} C$. 

We say that an object $E \in \mathcal{D}$ is a \em convolution \rm of 
the complex \eqref{eqn-A-B-C-complex} if it is
a convolution of some right or left Postnikov system associated to it. 
The following is a direct proof of the three-term complex case of
a more general fact whose proof is sketched out in
\cite[\S{IV.2}, Exercise 1]{GelfandManin-MethodsOfHomologicalAlgebra}:

\begin{lemma}
\label{lemma-left-Postnikov-system-for-each-right-Postnikov-system-and-vice-versa}
For every right (resp. left) Postnikov system associated to the complex  
\eqref{eqn-A-B-C-complex} there is a left (resp. right) Postnikov system 
with an isomorphic convolution. 
\end{lemma}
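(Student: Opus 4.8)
The plan is to prove the two implications symmetrically by a direct diagram chase, exploiting the octahedral axiom to relate the two kinds of Postnikov systems. I will describe the passage from a right Postnikov system to a left one; the reverse is dual and handled the same way (or by passing to $\mathcal{D}^{\opp}$, noting that a right Postnikov system for $A\xrightarrow{f}B\xrightarrow{g}C$ in $\mathcal{D}$ becomes a left Postnikov system for $C\xrightarrow{g}B\xrightarrow{f}A$ in $\mathcal{D}^{\opp}$, up to the usual shift bookkeeping). So suppose we are given the right Postnikov system \eqref{eqn-right-Postnikov-system-on-A-B-C}, with its exact triangle $B\xrightarrow{g}C\xrightarrow{h}X\xrightarrow{i}B[1]$ and the factorization $f[1] = i\circ j$ for $j\colon A[1]\to X$; its convolution is $E=\cone(j)$.

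First I would complete $f\colon A\to B$ to an exact triangle $A\xrightarrow{f}B\xrightarrow{k}Y\xrightarrow{l}A[1]$, so that $Y=\cone(f)$ is the first object of the prospective left Postnikov system. Second, I would apply the octahedral axiom to the composable pair of morphisms whose composite involves $f$ and $g$ — more precisely, to the two triangles on $f$ and on $g$ (viewing $g$ via the rotated triangle $C[-1]\xrightarrow{} B \xrightarrow{g} C$, or directly using that $gf=0$). The octahedron built on $A\xrightarrow{f}B$ and $B\xrightarrow{g}C$ produces a morphism $m\colon Y\to C$ lifting $g$ (i.e. $m\circ k = g$) together with an exact triangle $Y\xrightarrow{m}C\to X'\to Y[1]$, where $X'=\cone(gf)=\cone(0)$-type object; because $gf=0$ one identifies $X'$ with $C\oplus A[1]$, but the more useful output is the exact triangle connecting $\cone(f)$, $\cone(g)$ and $\cone(gf)$. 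Matching this against the triangle on $g$ already present in the right Postnikov system pins down $m$ so that $(Y,m,l)$ forms a left Postnikov system \eqref{eqn-left-Postnikov-system-on-A-B-C} for \eqref{eqn-A-B-C-complex}, with $k$ the map $B\to Y$ and $l$ the connecting map $Y\to A[1]$.

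Third, I would identify the two convolutions. The convolution of the left system is $\cone(m\colon Y\to C)$. I claim this is canonically isomorphic to $\cone(j\colon A[1]\to X)$: both fit into the octahedral diagram built on $B\xrightarrow{g}C$ and on the single object/morphism data of the right system. Concretely, from the triangle $B\to C\xrightarrow{h}X\xrightarrow{i}B[1]$ and the triangle $A\to B\to Y\to A[1]$, the octahedral axiom applied to the pair $(B\to Y\text{ via }k,\; ?)$ — or most cleanly, applied to $A[1]\xrightarrow{j}X\xrightarrow{h[1]^{-1}\text{-adjusted}}$ — yields an exact triangle relating $\cone(j)$, $\cone(m)$ and a contractible term, forcing $\cone(j)\simeq\cone(m)$. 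Equivalently, one checks that both convolutions realize the "total complex" of the same twisted complex after one observes (as in \S\ref{section-twisted-complexes}) that a DG-enhancement is not needed for this particular comparison because $\hom^{-1}(A,C)$-obstructions do not enter: the two cones are connected by a morphism that is an isomorphism on all cohomology by the five lemma applied to the long exact sequences of $\hom$'s.

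The main obstacle I anticipate is bookkeeping rather than conceptual: the octahedral axiom only asserts existence of the comparison morphisms, not functoriality or uniqueness, so I must be careful to choose the lifts $m$ and $j$ compatibly and then verify that the induced map between the two convolutions is an isomorphism (not merely that an abstract isomorphism exists, since the lemma as stated only claims isomorphic convolutions, so in fact existence suffices). Thus the real work is: (i) extracting $m$ from the octahedron and checking $m\circ k = g$ and that the resulting connecting maps assemble into a valid left Postnikov diagram; and (ii) running one more octahedron (or a direct $3\times 3$ diagram) to exhibit $\cone(m)\simeq\cone(j)$. Both are finite diagram chases with no homological vanishing required, so the argument stays entirely within the axioms of a triangulated category.
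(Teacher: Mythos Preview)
Your instinct to use the octahedral axiom is right and matches the paper, but the specific octahedron you propose is the wrong one. Applying the octahedron to the pair $A\xrightarrow{f}B\xrightarrow{g}C$ produces a triangle relating $Y=\cone(f)$, $\cone(gf)\simeq A[1]\oplus C$, and $X=\cone(g)$, but this construction uses only $f$ and $g$ and never the chosen lift $j$. The component $A[1]\to X$ that this octahedron outputs is some $j'$ with $i\circ j'=f$, not a priori the given $j$, so the map $m\colon Y\to C$ you extract has no link to $\cone(j)$. Your second octahedron is then asked to prove $\cone(m)\simeq\cone(j)$ with nothing tying $m$ to $j$; the aside about the five lemma and $\homm^{-1}(A,C)$-obstructions not entering is a red herring, since no long-exact-sequence comparison is available here.

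The fix, which is exactly what the paper does, is to feed the octahedron the factorisation you already have: $f = i\circ j[-1]$, i.e.\ the composable pair $A\xrightarrow{j[-1]}X[-1]\xrightarrow{i}B$. By rotation of the given triangle on $g$ we have $\cone(i)\simeq C$; the cone of the composite is $Y=\cone(f)$; and $\cone(j[-1])\simeq\cone(j)[-1]$. The octahedral output triangle is then
\[
\cone(j)[-1]\longrightarrow Y\xrightarrow{\ m\ } C\longrightarrow \cone(j),
\]
which simultaneously supplies the map $m$ with $m\circ k = g$ (from the commutativities built into the octahedron) and the identification $\cone(m)\simeq\cone(j)$. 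The paper packages this as a single $3\times 3$ diagram \`a la May, built on the commutative square with rows $A\xrightarrow{f}B$ and $X[-1]\xrightarrow{i}B$ and left vertical map $j[-1]$. Your parenthetical ``or a direct $3\times 3$ diagram'' was pointing at the right tool, but it must be anchored at the square involving $j$, not at the pair $(f,g)$.
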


\begin{proof}
Let 
\begin{equation}
\label{eqn-lemma-right-Postnikov-system-on-A-B-C}
\begin{tikzcd}[column sep={1.155cm,between origins}, row sep={1.333cm,between origins}]
A
\ar{rr}{f}
\ar[dotted]{drrr}[']{j}
& &
B
\ar{rr}{g}
\ar[phantom]{drr}[description, pos=0.45]{\star}
& &
C
\ar{dl}{h}
\\
& &
&
X
\ar[dashed]{ul}[']{i}
&
~
\end{tikzcd}
\end{equation}
be any right Postnikov system associated to \eqref{eqn-A-B-C-complex}. 
Then we have a commutative diagram
\begin{equation}
\label{eqn-right-postnikov-system-commutative-square}
\begin{tikzcd}
A
\ar{r}{f}
\ar{d}[']{j[-1]}
&
B
\ar[equals]{d}
\\
X[-1]
\ar{r}[']{i}
&
B.
\end{tikzcd}
\end{equation}
Let 
$$ A \xrightarrow{f} B \xrightarrow{k} Y \xrightarrow{l} A[1] $$
be any exact triangle incorporating the map $f$. 
By \cite[Lemma 2.6]{May-TheAdditivityofTracesInTriangulatedCategories}
it follows from the octahedral axiom that 
\eqref{eqn-right-postnikov-system-commutative-square} can be completed to 
the following $3 \times 3$ diagram with exact rows and columns
\begin{equation}
\label{eqn-right-postnikov-system-commutative-square-completed}
\begin{tikzcd}
A
\ar{r}{f}
\ar{d}[']{j[-1]}
&
B
\ar[equals]{d}
\ar{r}{k}
& 
Y
\ar{d}
\\
X[-1]
\ar{r}[']{i}
\ar{d}
&
B
\ar{d}
\ar{r}{g}
&
C 
\ar{d}
\\
\cone(j)[-1]
\ar{r}
& 
\cone(\id)
\ar{r}
&
Z.
\end{tikzcd}
\end{equation}
Let $m$ be the map $Y \rightarrow C$ in the right column of 
\eqref{eqn-right-postnikov-system-commutative-square-completed}. 
Since the top right square in 
\eqref{eqn-right-postnikov-system-commutative-square-completed}
commutes 
\begin{equation}
\label{eqn-lemma-left-Postnikov-system-on-A-B-C}
\begin{tikzcd}[column sep={1.155cm,between origins}, row sep={1.333cm,between origins}]
A
\ar{rr}{f}
& &
B
\ar{rr}{g}
\ar{dl}[']{k}
\ar[phantom]{dll}[description, pos=0.475]{\star}
& &
C
\\
~
& 
\ar[dashed]{ul}{l}
Y
\ar{urrr}[']{m}
&
&
&
\end{tikzcd}
\end{equation}
is a left Postnikov system associated to  \eqref{eqn-A-B-C-complex}. 
Since $\cone(\id) \simeq 0$ and the bottom row is exact the object
$Z$ is isomorphic to $\cone(j)$, i.e. the convolution of the
right Postnikov system \eqref{eqn-lemma-right-Postnikov-system-on-A-B-C}. 
On the other hand, since the right column is exact, $Z$ is isomorphic
to $\cone(m)$, i.e. the convolution of the left Postnikov system 
\eqref{eqn-lemma-left-Postnikov-system-on-A-B-C}. Thus 
\eqref{eqn-lemma-left-Postnikov-system-on-A-B-C} is a left Postnikov
system whose convolution is isomorphic to that of 
\eqref{eqn-lemma-right-Postnikov-system-on-A-B-C}, as desired. 

The proof that given a left Postnikov system associated to 
\eqref{eqn-A-B-C-complex} we can construct a right Postnikov system 
with an isomorphic convolution is analogous. 
\end{proof}

\begin{lemma}
\label{lemma-uniqueness-of-convolutions-for-a-two-step-complex}
If the natural map 
\begin{align}
\label{eqn-the-map-controlling-the-right-Postnikov-system-convolutions} 
\homm^{-1}(A,B) \xrightarrow{ g \circ (-)} \homm^{-1}(A,C)
\end{align}
is surjective then the convolutions of all right Postnikov systems
associated to \eqref{eqn-A-B-C-complex} are isomorphic. 

Similarly, if the natural map 
\begin{align}
\label{eqn-the-map-controlling-the-left-Postnikov-system-convolutions} 
\homm^{-1}(B,C) \xrightarrow{(-) \circ f} \homm^{-1}(A,C)
\end{align} 
is surjective then the convolutions of all left Postnikov systems
associated to \eqref{eqn-A-B-C-complex} are isomorphic. 
\end{lemma}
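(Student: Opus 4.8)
The plan is to establish something slightly stronger: under the surjectivity hypothesis, once the cone entering the Postnikov system is fixed, the lifting morphism is \emph{unique}, so there is literally one convolution; different choices of cone are then reconciled by a standard isomorphism-of-triangles argument. Consider the right Postnikov case and fix an exact triangle $B \xrightarrow{g} C \xrightarrow{h} X \xrightarrow{i} B[1]$. A right Postnikov system built on this triangle amounts to a lift $j\colon A[1]\to X$ of $f$, i.e.\ a morphism with $i\circ j = f[1]$ (such a $j$ exists because $g\circ f = 0$), and its convolution is $\cone(j)$. If $j,j'$ are two such lifts, then $i\circ(j-j')=0$; applying $\homm(A[1],-)$ to the triangle and using exactness of the resulting long exact sequence, we may write $j-j'=h\circ\phi$ for some $\phi\in\homm(A[1],C)=\homm^{-1}(A,C)$. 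Now I would invoke surjectivity of $g\circ(-)\colon\homm^{-1}(A,B)\to\homm^{-1}(A,C)$: write $\phi=g\circ\tilde\phi$, and then $j-j'=h\circ g\circ\tilde\phi=0$ since consecutive morphisms in an exact triangle compose to zero. Hence $j=j'$.

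To finish the right Postnikov case I would pass between cones: given another exact triangle $B\xrightarrow{g}C\xrightarrow{h'}X'\xrightarrow{i'}B[1]$, the triangulated axioms give an isomorphism $\theta\colon X\to X'$ with $h'=\theta\circ h$ and $i=i'\circ\theta$; this $\theta$ carries the unique lift $j$ for $X$ to the unique lift $j'$ for $X'$ (as $i'\circ(\theta\circ j)=i\circ j=f[1]$), so the square with vertical maps $\id_{A[1]}$ and $\theta$ commutes, completes to a morphism of exact triangles $\cone(j)\to\cone(j')$, and the five lemma in triangulated categories forces the induced map on convolutions to be an isomorphism. Thus all right Postnikov systems have isomorphic convolutions.

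The left Postnikov case is entirely dual. Fix an exact triangle $A\xrightarrow{f}B\xrightarrow{k}Y\xrightarrow{l}A[1]$; a left Postnikov system on it is a lift $m\colon Y\to C$ with $m\circ k=g$ and convolution $\cone(m)$. Two lifts satisfy $(m-m')\circ k=0$, so $\homm(-,C)$ applied to the triangle gives $m-m'=\psi\circ l$ with $\psi\in\homm(A[1],C)=\homm^{-1}(A,C)$; surjectivity of $(-)\circ f\colon\homm^{-1}(B,C)\to\homm^{-1}(A,C)$ lets us write $\psi=\tilde\psi\circ f[1]$, whence $m-m'=\tilde\psi\circ(f[1]\circ l)=0$ because $f[1]\circ l=0$. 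The same cone-choice argument as above then yields isomorphic convolutions for all left Postnikov systems.

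I do not expect a real obstacle: the entire content is that the surjectivity assumption precisely absorbs the ambiguity in the lift once one remembers that composites of consecutive maps in an exact triangle vanish, so a short diagram chase suffices. The only mildly tedious point is the routine verification that distinct cones of a fixed map produce isomorphic convolutions — but this is the standard ``$\cone$ is functorial up to the five lemma'' observation and poses no difficulty.
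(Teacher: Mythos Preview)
Your proposal is correct and follows essentially the same approach as the paper: both fix a cone triangle, reduce all right Postnikov systems to lifts $j\colon A[1]\to X$ of $f$, and then use the long exact sequence together with the surjectivity hypothesis to show the lift is unique (the paper phrases this as $\ker(i\circ(-))=\img(h\circ(-))=0$, while you unwind it explicitly as $j-j'=h\circ g\circ\tilde\phi=0$). The transport between different cones and the dual left-Postnikov argument are likewise handled identically.
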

\begin{proof}
We only prove the first assertion as the second assertion is proved
similarly.  
Take any exact triangle incorporating the map $g$
\begin{align}
\label{eqn-exact-triangle-incorporating-g}
B \xrightarrow{g} C \xrightarrow{h} X \xrightarrow{i} B[1]. 
\end{align}
Then for every right Postnikov system 
\begin{equation}
\label{eqn-lemma2-another-right-Postnikov-system-on-A-B-C}
\begin{tikzcd}[column sep={1.155cm,between origins}, row sep={1.333cm,between origins}]
A
\ar{rr}{f}
\ar[dotted]{drrr}[']{j'}
& &
B
\ar{rr}{g}
\ar[phantom]{drr}[description, pos=0.45]{\star}
& &
C
\ar{dl}{h'}
\\
& &
&
X'
\ar[dashed]{ul}[']{i'}
&
~
\end{tikzcd}
\end{equation}
there exists a map $A[1] \xrightarrow{j} X$ such that 
\begin{equation}
\label{eqn-lemma2-right-Postnikov-system-on-A-B-C}
\begin{tikzcd}[column sep={1.155cm,between origins}, row sep={1.333cm,between origins}]
A
\ar{rr}{f}
\ar[dotted]{drrr}[']{j}
& &
B
\ar{rr}{g}
\ar[phantom]{drr}[description, pos=0.45]{\star}
& &
C
\ar{dl}{h}
\\
& &
&
X
\ar[dashed]{ul}[']{i}
&
~
\end{tikzcd}
\end{equation}
is a Postnikov system whose convolution is isomorphic to that 
of \eqref{eqn-lemma2-another-right-Postnikov-system-on-A-B-C}. 
Indeed, let $X' \xrightarrow{t} X$ be an isomorphism which 
identifies the exact triangles in 
\eqref{eqn-lemma2-right-Postnikov-system-on-A-B-C}
and in
\eqref{eqn-lemma2-another-right-Postnikov-system-on-A-B-C}
and set $j = t \circ j'$. 

Thus the convolutions of all right Postnikov systems
associated to \eqref{eqn-A-B-C-complex} are isomorphic
to the cones of all possible maps $A[1] \xrightarrow{j} X$
with $f = i \circ j$. In particular, to show that all the
convolutions are isomorphic it would suffice to show that that
there exists a unique map $A[1] \xrightarrow{j} X$ with 
$f = i \circ j$. 

Now consider the following fragment of the long exact sequence
obtained by applying $\homm^\bullet_{\mathcal{D}}(A,-)$ to the exact
triangle \eqref{eqn-exact-triangle-incorporating-g}:
\begin{align*}
\dots \rightarrow
\homm^{-1}_\mathcal{D}(A,B) \xrightarrow{g \circ (-)}
& \homm^{-1}_\mathcal{D}(A,C) \xrightarrow{h \circ (-)} \\
\rightarrow
& \homm^{-1}_\mathcal{D}(A,X) \xrightarrow{i \circ (-)}
\homm^{0}_\mathcal{D}(A,B) 
\rightarrow \dots
\end{align*}
Let $J$ be the set of maps $A[1] \xrightarrow{j} X$ with $f = i \circ
j$. It is the pre-image in 
$\homm^{-1}_\mathcal{D}(A,X)$
of $f \in \homm^{0}_\mathcal{D}(A,B)$. Choosing any $j_0 \in J$
induces a one-to-one correspondence between $J$ and 
$\ker \bigl(i \circ (-)\bigr) \subseteq \homm^{-1}_\mathcal{D}(A,X)$.
Thus it suffices to show that $\ker \bigl(i \circ (-)\bigr) = 0$. 
By the exactness of the fragment above 
$$\ker \bigl(h \circ (-)\bigr) = \img \bigl(h \circ (-)\bigr)$$
and $\img \bigl(h \circ (-)\bigr) = 0$ is equivalent to 
$\ker \bigl(h \circ (-)\bigr) =  \homm^{-1}_\mathcal{D}(A,C)$. 
this is further equivalent
to 
$$ \img \bigl(g \circ (-)\bigr) = \homm^{-1}_\mathcal{D}(A,C),$$
i.e. to $g \circ (-)$ being surjective, as desired. 
\end{proof}
Remarks:
\begin{enumerate}
\item Note, in particular, that if $\homm^{-1}_{\mathcal{D}}(A,C)$ is zero then 
both the criteria
in Lemma \ref{lemma-uniqueness-of-convolutions-for-a-two-step-complex}
above are automatically fulfilled. Thus these criteria each refine that
of $\homm^{-1}_{\mathcal{D}}(A,C)$ vanishing. 
\item 
In view of Lemma 
\ref{lemma-left-Postnikov-system-for-each-right-Postnikov-system-and-vice-versa}
if either of the criteria in 
Lemma \ref{lemma-uniqueness-of-convolutions-for-a-two-step-complex}
holds then the convolutions of all right and all left Postnikov
systems associated to \eqref{eqn-A-B-C-complex} are isomorphic. 
\end{enumerate}

\subsection{Twisted complexes}
\label{section-twisted-complexes}

For technical details on twisted complexes, pretriangulated categories and 
DG-enhancements see \cite[\S3]{AnnoLogvinenko-SphericalDGFunctors},
\cite{BondalKapranov-EnhancedTriangulatedCategories},
\cite[\S1]{LuntsOrlov-UniquenessOfEnhancementForTriangulatedCategories}. 

Let $\mathcal{C}$ be a $DG$-category. Let 
$\pretriag(\mathcal{C})$ be the DG-category of \em one-sided twisted
complexes \rm $(E_i, q_{ij})$ over $\mathcal{C}$. 
The category $H^0(\pretriag(\mathcal{C}))$ has a natural triangulated
structure: it is the triangulated hull of the image of 
$H^0(\C)$ in the triangulated category $H^0(\modC)$ under Yoneda
embedding. 
Throughout this section assume further that $\mathcal{C}$ is 
\em pretriangulated\rm.  
Then $H^0(\mathcal{C})$ is itself triangulated 
and the Yoneda embedding 
$H^0(\mathcal{C}) \rightarrow H^0(\pretriag(\mathcal{C}))$ is an equivalence. 
Fix its quasi-inverse 
$H^0(\pretriag(\mathcal{C})) \rightarrow H^0(\mathcal{C})$. We refer
to it as the \em convolution functor \rm and write 
$\left\{ E_i, q_{ij} \right\}$ for the convolution 
in $H^0(C)$ of the twisted complex $(E_i, q_{ij})$. We think of 
$\mathcal{C}$ as a DG-enhancement of 
the triangulated category $H^0(\mathcal{C})$ and of 
$\pretriag(\mathcal{C})$ as an enlargement of $\mathcal{C}$ to  
a bigger DG-enhancement which allows 
for the calculus of twisted complexes described below. 

Any one-sided twisted complex $(E_i, q_{ij})$ over $\mathcal{C}$ 
defines an ordinary differential complex 
\begin{align}
\label{eqn-complex-associated-to-a-twisted-complex}
\dots \xrightarrow{q_{i-2,i-1}} E_{i-1} \xrightarrow{q_{i-1,i}}
E_{i} \xrightarrow{q_{i,i+1}} E_{i+1} \xrightarrow{q_{i+1,i+2}} \dots
\end{align}
in $H^0(\mathcal{C})$. This is
because by the definition of a twisted complex all $q_{i,i+1}$ are
closed of degree $0$ and we have 
$q_{i,i+1} \circ q_{i-1,i} = (-1)^{i} dq_{i-1,i+1}$. 
It is well known that the data of the higher twisted differentials 
of $(E_i, q_{ij})$ defines  
a number of Postnikov systems for 
\eqref{eqn-complex-associated-to-a-twisted-complex} in $H^0(\mathcal{C})$ 
whose convolutions are all isomorphic to $\left\{ E_i, q_{ij} \right\}$. 
Below we describe this in detail for two- and three-term twisted
complexes. 

A two-term one-sided twisted complex concentrated in degrees $-1,0$ is
the data of
\begin{equation}
\label{eqn-two-term-twisted-complex}
\begin{tikzcd}[column sep={1.155cm}]
A
\ar{r}{f} 
& 
\underset{\degzero}{B}
\end{tikzcd}
\end{equation}
where $A,B \in \mathcal{C}$ and $f$ is a degree $0$ closed map in
$\mathcal{C}$. The corresponding complex in $H^0(\mathcal{C})$ is 
\begin{equation}
\label{eqn-two-term-ordinary-complex}
\begin{tikzcd}[column sep={1.155cm}]
A
\ar{r}{f} 
& 
B
\end{tikzcd}
\end{equation}
A Postnikov system for \eqref{eqn-two-term-ordinary-complex}
is an exact triangle incorporating $f$. The triangle
defined by \eqref{eqn-two-term-twisted-complex} is 
\begin{equation}
\label{eqn-two-term-Postnikov-system}
\begin{tikzcd}[column sep={1.155cm,between origins}, row sep={1.333cm,between origins}]
A
\ar{rr}{f} 
&&
B
\ar{ld}{k}
\\
& 
\{
A \xrightarrow{f} 
\underset{\degzero}{B}
\}
\ar[dashed]{lu}{l}
&
\end{tikzcd}
\end{equation}
where $l$ and $k$ are the images in $H^0(\mathcal{C})$ of the
following maps of twisted complexes:
\begin{equation*}
l:
\begin{tikzcd}[column sep={1.333cm,between origins}, row sep={1.333cm,between origins}]
 A
\ar{r}{f}
\ar[']{dr}{\id}
&
\underset{\degzero}{B}
\\
&
\underset{\degzero}{A}
\end{tikzcd}
\qquad \qquad k:
\begin{tikzcd}[column sep={1.333cm,between origins}, row sep={1.333cm,between origins}]
&
\underset{\degzero}{B}
\ar{d}{\id}
\\
A
\ar{r}{f}
&
\underset{\degzero}{B}
\end{tikzcd}
\end{equation*}

A three-term one-sided twisted complex concentrated in degrees $-2$, 
$-1$, $0$ is the data of
\begin{equation}
\label{eqn-three-term-twisted-complex}
\begin{tikzcd}[column sep={2cm}]
A
\ar{r}{f} 
\ar[dashed, bend left=20]{rr}{x}
& 
B
\ar{r}{g} 
&
\underset{\degzero}{C}
\end{tikzcd}
\end{equation}
where $A,B,C \in \mathcal{C}$, $f$ and $g$ are closed maps of degree
$0$ in $\mathcal{C}$ and $x$ is a degree $-1$ map in $\mathcal{C}$ 
such that that $dx=-g\circ f$. The corresponding 
complex in $H^0(\mathcal{C})$ is
\begin{equation}
\label{eqn-three-term-ordinary-complex}
\begin{tikzcd}[column sep={2cm}]
A
\ar{r}{f} 
& 
B
\ar{r}{g} 
&
C
\end{tikzcd}
\end{equation}
with the composition $g \circ f$ being zero in $H^0(\mathcal{C})$ as
it is explicitly a boundary $dx$ in $\mathcal{C}$. The data of the degree 
$-1$ map $x$ in $\mathcal{C}$ defines a right and a left Postnikov system 
for the complex \eqref{eqn-three-term-ordinary-complex} in $H^0(\mathcal{C})$:
\begin{defn}
\label{defn-induced-Postnikov-systems}
The \em right Postnikov system induced by the twisted complex 
\eqref{eqn-three-term-twisted-complex} \rm is 
\begin{equation}
\label{eqn-right-Postnikov-system-from-x}
\begin{tikzcd}[column sep={1.155cm,between origins}, row sep={1.333cm,between origins}]
A
\ar{rr}{f}
\ar[dotted]{drrr}[']{j}
& &
B
\ar{rr}{g}
\ar[phantom]{drr}[description, pos=0.45]{\star}
& &
C
\ar{dl}{h}
\\
& &
&
\{B \xrightarrow{g} \underset{\degzero}{C}\}
\ar[dashed]{ul}[']{i}
&
~
\end{tikzcd}
\end{equation}
where the maps $h$, $i$, $j$ are the images in $H^0(\C)$ of the
following maps of twisted complexes:
\begin{equation*}
h:
\begin{tikzcd}[column sep={1.333cm,between origins}, row sep={1.333cm,between origins}]
&
\underset{\degzero}{C}
\ar{d}{\id}
\\
B
\ar{r}{g}
&
\underset{\degzero}{C}
\end{tikzcd}
\qquad \qquad i:
\begin{tikzcd}[column sep={1.333cm,between origins}, row sep={1.333cm,between origins}]
B
\ar{r}{g}
\ar[']{dr}{\id}
&
\underset{\degzero}{C}
\\
&
\underset{\degzero}{B}
\end{tikzcd}
\qquad \qquad j:
\begin{tikzcd}
\underset{\degminusone}{A}
\ar[']{d}{f}
\ar{rd}{x}
&
\\
B
\ar{r}{g}
&
\underset{\degzero}{C}.
\end{tikzcd}
\end{equation*}
The \em left Postnikov system induced by the complex
\eqref{eqn-three-term-twisted-complex} \rm is
\begin{equation}
\label{eqn-left-Postnikov-system-from-x}
\begin{tikzcd}[column sep={1.155cm,between origins}, row sep={1.333cm,between origins}]
A
\ar{rr}{f}
& &
B
\ar{rr}{g}
\ar{dl}[']{k}
\ar[phantom]{dll}[description, pos=0.475]{\star}
& &
C
\\
~
& 
\ar[dashed]{ul}{l}
\{ A \xrightarrow{f} B \}
\ar{urrr}[']{m}
&
&
&
\end{tikzcd}
\end{equation}
where the maps $l$, $k$, $m$ are the images in $H^0(\C)$ of the respective maps:
\begin{equation*}
l:
\begin{tikzcd}[column sep={1.333cm,between origins}, row sep={1.333cm,between origins}]
A
\ar{r}{f}
\ar[']{dr}{\id}
&
\underset{\degzero}{B}
\\
&
\underset{\degzero}{A}
\end{tikzcd}
\qquad \qquad k:
\begin{tikzcd}[column sep={1.333cm,between origins}, row sep={1.333cm,between origins}]
&
\underset{\degzero}{B}
\ar{d}{\id}
\\
A
\ar{r}{f}
&
\underset{\degzero}{B}
\end{tikzcd}
\qquad \qquad m:
\begin{tikzcd}[column sep={1.333cm,between origins}, row sep={1.333cm,between origins}]
A
\ar{r}{f}
\ar[']{rd}{x}
&
\underset{\degzero}{B}
\ar{d}{g}
\\
&
\underset{\degzero}{C}.
\end{tikzcd}
\end{equation*}
\end{defn}

\begin{lemma}
\label{lemma-the-convolutions-of-induced-Postnikov-systems}
For any twisted complex \eqref{eqn-three-term-twisted-complex}
the convolutions of its left and right Postnikov systems are 
isomorphic in $H^0(\C)$ to the convolution of the twisted complex
itself.  
\end{lemma}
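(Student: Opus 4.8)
The plan is to identify the convolution $\left\{A \xrightarrow{f} B \xrightarrow{g} C\right\}$ with two different iterated cones of morphisms of twisted complexes, one "splitting off" the top term $C$ and one splitting off the bottom term $A$. Write $T$ for the twisted complex \eqref{eqn-three-term-twisted-complex} and $C_{tot}\in H^0(\C)$ for its convolution. The key observation is that, already in $\pretriag(\C)$, one has
\[
T \;\simeq\; \cone\!\left( P_A \xrightarrow{\;\phi\;} \{B \xrightarrow{g} \underset{\degzero}{C}\}\right)
\qquad\text{and}\qquad
T \;\simeq\; \cone\!\left( \{A \xrightarrow{f} \underset{\degzero}{B}\} \xrightarrow{\;\psi\;} C\right),
\]
where $P_A$ denotes $A$ placed in degree $-1$, the morphism $\phi$ has exactly the two non-zero components $f\colon A\to B$ and $x\colon A\to C$, and $\psi$ has exactly the two non-zero components $g\colon B\to C$ and $x\colon A\to C$. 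Both isomorphisms are immediate from the description of the cone of a closed degree-$0$ morphism of one-sided twisted complexes as the direct sum of the (shifted) source and the target with the morphism installed as the new off-diagonal entry: one simply has to check that the twisted differential of the resulting complex reproduces $f$, $g$, $x$, which is precisely the content of the Maurer–Cartan relation $dx=-g\circ f$ together with the sign conventions recalled before \eqref{eqn-complex-associated-to-a-twisted-complex}. Equivalently, the same triangles arise from the brutal-truncation short exact sequences of twisted complexes $0\to C\to T\to (A\xrightarrow{f}B)\to 0$ and $0\to (B\xrightarrow{g}C)\to T\to A\to 0$.

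Next I would treat the right Postnikov system. Since $\pretriag(\C)$ is pretriangulated and closed under cones of closed degree-$0$ morphisms, and the convolution functor $H^0(\pretriag(\C))\to H^0(\C)$ is a triangulated quasi-inverse to the Yoneda embedding, the cone presentation $T\simeq\cone(\phi)$ descends to an exact triangle
\[
A[1] \xrightarrow{\;j\;} \{B\xrightarrow{g}C\} \longrightarrow C_{tot} \longrightarrow A[2]
\]
in $H^0(\C)$, where $j$ is the $H^0$-image of $\phi$; by the construction of $\phi$ this is exactly the map $j$ of Definition \ref{defn-induced-Postnikov-systems}. The two-term triangle \eqref{eqn-two-term-Postnikov-system} applied to $\{B\xrightarrow{g}C\}$ furnishes the starred exact triangle $B\xrightarrow{g}C\xrightarrow{h}\{B\to C\}\xrightarrow{i}B[1]$ of \eqref{eqn-right-Postnikov-system-from-x} with $h,i$ the $H^0$-images of the twisted-complex maps listed in Definition \ref{defn-induced-Postnikov-systems}, and the commutativity $f=i\circ j$ of the non-starred triangle is read off directly from the components of $\phi$ (it says that post-composing $\phi$ with the projection onto $B$ gives $f$, which holds on the nose as twisted-complex maps). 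Hence \eqref{eqn-right-Postnikov-system-from-x} really is the right induced Postnikov system and its convolution $\cone(j)$ is $C_{tot}$. The left Postnikov system is handled symmetrically from $T\simeq\cone(\psi)$: one obtains the exact triangle $\{A\xrightarrow{f}B\}\xrightarrow{m}C\to C_{tot}\to\{A\to B\}[1]$ with $m$ the $H^0$-image of $\psi$ (matching the $m$ of Definition \ref{defn-induced-Postnikov-systems}), combines it with the two-term triangle $A\xrightarrow{f}B\xrightarrow{k}\{A\to B\}\xrightarrow{l}A[1]$ and the maps $k,l$, and checks the relevant commutativity from the components of $\psi$, so that \eqref{eqn-left-Postnikov-system-from-x} is the left induced Postnikov system with convolution $\cone(m)=C_{tot}$. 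Combining the two cases, all three convolutions coincide with $C_{tot}$.

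The only genuine work, and the step most prone to sign errors, is the bookkeeping in the first paragraph: one must verify that the cone of $\phi$ (resp. of $\psi$), formed as a twisted complex, has its off-diagonal twisted differentials equal to precisely $f$, $g$, $x$ — with the signs forced by $q_{i,i+1}\circ q_{i-1,i}=(-1)^i\,d q_{i-1,i+1}$ and $dx=-g\circ f$ — and that the connecting maps of the induced exact triangles are literally the maps $j$ and $m$ written down in Definition \ref{defn-induced-Postnikov-systems}, not merely some maps fitting into a Postnikov system. This last point is the reason the identification must be made carefully rather than abstractly: as noted in the remarks after Lemma \ref{lemma-uniqueness-of-convolutions-for-a-two-step-complex}, Postnikov systems in general have non-isomorphic convolutions, so it matters that we land on the specific induced systems of Definition \ref{defn-induced-Postnikov-systems}. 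Everything else is formal, using only that $\pretriag(\C)$ is pretriangulated, is stable under cones of closed degree-$0$ morphisms, and that the convolution functor is triangulated.
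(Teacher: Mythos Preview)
Your proof is correct and follows essentially the same approach as the paper: both recognise that $\cone(j)$ and $\cone(m)$, viewed as twisted complexes of twisted complexes, have total complex equal to the original twisted complex $T$, whence their convolutions agree with $\{T\}$. The only difference is cosmetic --- the paper invokes the general Bondal--Kapranov ``convolution of the total complex equals the double convolution'' result, whereas you unpack this identification by hand for the two specific cones at issue.
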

\begin{proof}
By definition the convolutions of  
\eqref{eqn-right-Postnikov-system-from-x} and 
\eqref{eqn-left-Postnikov-system-from-x} are 
$\cone (j)$ and $\cone (m)$, respectively. As we've seen, 
the cone of a map in $H^0(\mathcal{C)}$ is its convolution as 
a two-term twisted complex over $\mathcal{C}$. In case of $j$ and $m$, 
the objects of this twisted complex are themselves convolutions of
twisted complexes. The double convolution of a twisted complex of twisted 
complexes is isomorphic to the convolution of its total complex
\cite[\S2]{BondalKapranov-EnhancedTriangulatedCategories}. 
In case of both $j$ and $m$ these total complexes coincide with 
\eqref{eqn-three-term-twisted-complex}, whence the result. 
\end{proof}

The conceptual explanation for Lemma 
\ref{lemma-left-Postnikov-system-for-each-right-Postnikov-system-and-vice-versa}
is as follows. Any Postnikov system for a given complex in
$H^0(\mathcal{C})$ lifts (non-uniquely) to a twisted complex over
$\C$. In Lemma \ref{lemma-any-Postnikov-system-is-isomorphic-to-an-induced-one}
we prove this for three-term complexes. This twisted complex 
can then be used to induce a Postnikov system of any given type 
whose convolvution is isomorphic to the convolution of the original 
Postnikov system. In Lemma 
\ref{lemma-the-convolutions-of-induced-Postnikov-systems}
we prove this for three-term complexes.  The general case 
can be proved in a similar way but with a more convoluted notation.  

\begin{lemma}
\label{lemma-any-Postnikov-system-is-isomorphic-to-an-induced-one}
Any right or left Postnikov system for any differential complex \begin{equation}
\label{eqn-three-term-ordinary-complex-to-lift}
\begin{tikzcd}[column sep={1.333cm}]
A
\ar{r}{f} 
& 
B
\ar{r}{g} 
&
C
\end{tikzcd}
\end{equation}
in $H^0(\mathcal{C})$ is induced up to an isomorphism by some 
lift of \eqref{eqn-three-term-ordinary-complex-to-lift}
to a three-term twisted complex over $\mathcal{C}$. 
\end{lemma}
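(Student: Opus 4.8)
The plan is to treat the right Postnikov system case in detail; the left case is symmetric (or follows by passing to the opposite category). So suppose we are given a right Postnikov system
\begin{equation*}
\begin{tikzcd}[column sep={1.155cm,between origins}, row sep={1.333cm,between origins}]
A
\ar{rr}{f}
\ar[dotted]{drrr}[']{j}
& &
B
\ar{rr}{g}
\ar[phantom]{drr}[description, pos=0.45]{\star}
& &
C
\ar{dl}{h}
\\
& &
&
X
\ar[dashed]{ul}[']{i}
&
~
\end{tikzcd}
\end{equation*}
for the complex \eqref{eqn-three-term-ordinary-complex-to-lift} in $H^0(\mathcal{C})$. First I would choose, using that $\mathcal{C}$ is pretriangulated, actual lifts of $f$ and $g$ to closed degree-$0$ morphisms in $\mathcal{C}$ (abusively still denoted $f,g$). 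The starred exact triangle $B \xrightarrow{g} C \xrightarrow{h} X \xrightarrow{i} B[1]$ is isomorphic in $H^0(\mathcal{C})$ to the canonical triangle on the two-term twisted complex $\{B \xrightarrow{g} \underset{\degzero}{C}\}$, with $h,i$ corresponding to the structure maps displayed in Definition \ref{defn-induced-Postnikov-systems}; fix such an isomorphism $X \simeq \{B \xrightarrow{g} \underset{\degzero}{C}\}$ and transport $j$ across it, so that without loss of generality $X$ \emph{is} this two-term convolution and $i,h$ are the canonical maps. Under this identification, a morphism $A[1] \to X$ in $H^0(\mathcal{C})$ is represented by a closed degree-$0$ map of twisted complexes $\underset{\degminusone}{A} \to \{B \xrightarrow{g} \underset{\degzero}{C}\}$, i.e. by a pair $(p,x)$ with $p\colon A \to B$ closed of degree $0$ and $x\colon A \to C$ of degree $-1$ satisfying the twisted-complex closedness relation $g\circ p = -dx$.

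Next I would extract the constraint that $j$ actually fits into the Postnikov system, namely $i\circ j = f$ in $H^0(\mathcal{C})$. Composing $(p,x)$ with the structure map $i$ simply returns the $B$-component $p$, so $i\circ j = f$ forces $p = f$ up to a coboundary; adjusting the chosen representative $(p,x)$ by an appropriate homotopy (a degree $-1$ self-map of the twisted complex), I can arrange $p = f$ on the nose. Then the closedness relation reads $g\circ f = -dx$, which is exactly the condition in \eqref{eqn-three-term-twisted-complex} required for $(A,B,C,f,g,x)$ to be a legitimate three-term twisted complex over $\mathcal{C}$. This twisted complex is a lift of \eqref{eqn-three-term-ordinary-complex-to-lift}, and by Definition \ref{defn-induced-Postnikov-systems} the right Postnikov system it induces has precisely $\{B \xrightarrow{g}\underset{\degzero}{C}\}$ at the apex, canonical maps $h,i$, and connecting map $j$ given by the pair $(f,x)$ — so it agrees with our (transported) Postnikov system on the nose, and hence with the original one up to the isomorphism $X \simeq \{B \xrightarrow{g}\underset{\degzero}{C}\}$ fixed above.

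I expect the main obstacle to be the bookkeeping in the second paragraph: carefully justifying that an arbitrary morphism $A[1]\to X$ in $H^0(\mathcal{C})$ can be represented by an \emph{honest} closed map of twisted complexes with the $B$-component equal to the chosen lift of $f$, rather than merely homotopic to it. This uses the standard description of $\homm$ in $H^0(\pretriag(\mathcal{C}))$ as homotopy classes of twisted-complex morphisms together with the freedom to modify a representative within its homotopy class, but one has to check that the homotopy used to kill the discrepancy $p - f$ does not disturb the rest of the data beyond a harmless change of $x$. Everything else — choosing lifts of $f,g$, identifying the cone triangle with the canonical two-term convolution, and reading off the induced Postnikov system from Definition \ref{defn-induced-Postnikov-systems} — is routine given that $\mathcal{C}$ is pretriangulated. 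The left Postnikov case is handled identically after replacing the two-term convolution $\{B\xrightarrow{g}\underset{\degzero}{C}\}$ by $\{A\xrightarrow{f}B\}$ and $j$ by the analogous map $m$ determined by the pair $(x,g)$.
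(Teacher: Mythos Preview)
Your proposal is correct and follows essentially the same approach as the paper, only dualized: the paper treats the left Postnikov case (declaring the right case analogous), lifting $m$ to a map of twisted complexes with components $(x', g')$ and then adding the boundary of a degree $-1$ homotopy $\alpha$ to force $g' = g$ on the nose, exactly parallel to your adjustment of $(p,x)$ to force $p = f$. The ``main obstacle'' you anticipate is precisely the content of the paper's one displayed homotopy-correction step, and your analysis of it is accurate.
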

\begin{proof}
We prove the claim for left Postnikov systems, the proof for the right
ones is analogous. 
Any exact triangle incorporating $f$ is isomorphic to the exact
triangle \eqref{eqn-two-term-Postnikov-system}. Hence any left
Postnikov system for \eqref{eqn-three-term-ordinary-complex-to-lift}
is isomorphic to the one as in 
\eqref{eqn-left-Postnikov-system-from-x}
but with $m$ some unknown closed degree $0$ map in
$H^0(\mathcal{C})$. Since the convolution functor 
is an equivalence we can lift $m$ to some closed degree $0$ map 
of twisted complexes
\begin{equation}
\label{eqn-lift-m}
\begin{tikzcd}[column sep={1.333cm,between origins}, row sep={1.1cm,between origins}]
A
\ar{r}{f}
\ar[']{rd}{x'}
&
\underset{\degzero}{B}
\ar{d}{g'}
\\
&
\underset{\degzero}{C}. 
\end{tikzcd}
\end{equation}
We have $dx'+ g' \circ f = 0$ as the map is closed. By definition 
of a Postnikov system $m \circ k = g$ in $H^0(\C)$ and thus 
$g - g' = d \alpha$ for some degree $-1$ map $\alpha$. We then have
\begin{equation}
\label{eqn-modifying-x'-and-g'}
\begin{tikzcd}[column sep={1.333cm,between origins}, row sep={1.2cm,between origins}]
A
\ar{r}{f}
\ar[']{rd}{x'}
&
\underset{\degzero}{B}
\ar{d}{g'}
\\
&
\underset{\degzero}{C} 
\end{tikzcd}
\quad + \quad 
d\left(
\begin{tikzcd}[column sep={1.333cm,between origins}, row sep={1.2cm,between origins}]
A
\ar{r}{f}
&
\underset{\degzero}{B}
\ar{d}{\alpha}
\\
&
\underset{\degzero}{C} 
\end{tikzcd}
\right)
\quad = \quad 
\begin{tikzcd}[column sep={1.333cm,between origins}, row sep={1.2cm,between origins}]
A
\ar{r}{f}
\ar[']{rd}{x' - \alpha \circ f}
&
\underset{\degzero}{B}
\ar{d}{g}
\\
&
\underset{\degzero}{C}. 
\end{tikzcd}
\end{equation}
The right hand side of \eqref{eqn-modifying-x'-and-g'} is also 
a lift of $m$ and thus the left Postnikov system in question is 
induced by the three-term twisted complex lifting 
\eqref{eqn-three-term-ordinary-complex-to-lift}  
with $x=x' - \alpha \circ f$. 
\end{proof}

\subsection{$\mathbb{P}^n$-functors}
\label{section-p-functors}

Let $\A$ and $\B$ be enhanced triangulated categories.
As defined in \cite{Addington-NewDerivedSymmetriesOfSomeHyperkaehlerVarieties}
a \em split $\mathbb{P}^n$-functor \rm is a functor 
$F\colon \A \rightarrow \B$ which has left and right adjoints 
$ L, R \colon \B \rightarrow \A $ such that:
\begin{enumerate}
\item For some autoequivalence $H$ of $\A$ there exists an isomorphism   
\begin{align}
H^n \oplus H^{n-1} \oplus \dots \oplus H \oplus \id
\xrightarrow{\quad \gamma \quad} RF. 
\end{align}
\item (The strong monad condition) In the monad structure on $H^n \oplus
H^{n-1} \oplus \dots \oplus H \oplus \id$ induced by $\gamma^{-1}$
from the adjunction monad $RF$ the left multiplication by $H$ 
acts on 
\begin{align}
\label{eqn-H^n-dots-H}
H^n \oplus H^{n-1} \oplus \dots \oplus H
\end{align}
as an upper triangular matrix with $\id$'s on the main diagonal. 
Note that as such matrix is evidently invertible the resulting
endomorphism of \eqref{eqn-H^n-dots-H} is necessarily an isomorphism. 
\item (The weak adjoints condition) 
$R \simeq H^n L$. 
\end{enumerate}

Let $\psi$ be the $FHR \rightarrow FR$ 
component of $FRFR \xrightarrow{FR\trace - \trace FR} FR$ 
under the identification of
$FRFR$ with $FH^nR \oplus \dots \oplus FHR \oplus FR$ 
via $F\gamma R$. The \em $\mathbb{P}^n$-twist \rm $P_F$ was defined in 
\cite[\S3.3]{Addington-NewDerivedSymmetriesOfSomeHyperkaehlerVarieties}
as the convolution of 
\begin{align}
\label{eqn-FHR-FR-Id-complex}
FHR \xrightarrow{ \psi } FR \xrightarrow{\trace} \id 
\end{align}
given by a certain canonical right Postnikov system associated to it. 
Addington noted that such system is no longer unique but 
provided a canonical choice of one.

As mentioned in the introduction, the reason the notion introduced 
by Addington is best referred to as \em split \rm 
$\mathbb{P}^n$-functors is that the monad
$RF$ splits into a direct sum of $\id$ and powers of $H$. 
We propose the following more general notion 
of a $\mathbb{P}^n$-functor which allows $RF$ 
to be a repeated extension. 

\begin{defn}
Let $H$ be an endofunctor of $\A$. 
A \em cyclic extension of $\id$ by $H$ of degree $n$ \rm 
is a repeated extension $Q_n$ of the form
\begin{footnotesize}
\begin{equation}
\label{eqn-cyclic-extension-of-id-by-H-of-degree-n}
\begin{tikzcd}[column sep={0.35cm}]
\id 
\ar[phantom]{drr}[description, pos=0.45]{\star}
\ar{rr}{\iota_1}
& &
Q_1 
\ar{rr}{\iota_2}
\ar{ld}{\mu_1}
\ar[phantom]{drr}[description, pos=0.45]{\star}
& &
Q_2  
\ar{r}
\ar{ld}{\mu_2}
&
\dots
\ar{r}
&
Q_{n-2}
\ar[phantom]{drr}[description, pos=0.45]{\star}
\ar{rr}{\iota_{n-1}}
\ar{ld}
& 
& 
Q_{n-1}
\ar{ld}{\mu_{n-1}}
\ar{rr}{\iota_{n}}
& & 
Q_n.
\ar[phantom]{dll}[description, pos=0.45]{\star}
\ar{ld}{\mu_n}
\\
&
H
\ar[dashed]{lu}
& ~ &
H^2 
\ar[dashed]{lu}
\ar[dashed]{ll}
& ~ & 
\dots 
\ar[dashed]{ll}
\ar[dashed]{lu}
&
&
H^{n-1}
\ar[dashed]{ll}
\ar[dashed]{lu}
&~& 
H^n
\ar[dashed]{lu}
\ar[dashed]{ll}
& 
\end{tikzcd}
\end{equation}
\end{footnotesize}
Here all starred triangles
are exact, all the remaining triangles are commutative, and all
the dashed arrows denote maps of degree $1$. We further write
$\iota$ for the map $\id \xrightarrow{\iota_n \circ \dots \circ
\iota_1} Q_n$. 
\end{defn}

Equivalently, $Q_n$ is the convolution of a one-sided
twisted complex of the form
\begin{align}
H^n[-n] \rightarrow
H^{n-1}[-(n-1)] \rightarrow \dots \rightarrow 
H^2[-2] \rightarrow
H[-1] \rightarrow
\underset{\degzero}{\id} 
\end{align}
with 
arbitrary differentials. The maps $\id 
\xrightarrow{\iota} Q_n$ and $Q_n \xrightarrow{\mu_n} H^n$
are the inclusion of the degree $0$ term and the 
projection on the degree $-n$ term, respectively. 

\begin{defn}
A \em $\mathbb{P}^n$-functor \rm is a functor 
$F\colon \A \rightarrow \B $
with left and right adjoints $L, R \colon \B \rightarrow \A$
such that 
\begin{enumerate}
\item 
There exists an isomorphism 
$$ Q_n \xrightarrow{\gamma} RF $$
where $Q_n$ is a cyclic extension of $\id_\A$ by an autoequivalence $H$ 
of $\A$ with $H(\krn F) = \krn F$. Moreover, this isomorphism
intertwines $\id \xrightarrow{\action} RF$ and 
$\id \xrightarrow{\iota} Q_n$. 

Note that as $F \xrightarrow{F\action} FRF$ is a retract, so is 
$F\iota$. Hence the exact triangle $FR \to FQ_1R \to FHR$ is also 
split. Choose any splitting $FHR \to FQ_1R$ and denote
by $\phi$ the composition
\begin{equation}
FHR \xrightarrow{} FQ_1R 
\xrightarrow{\iota_n\circ\ldots \circ \iota_2}
FQ_nR \xrightarrow{ F \gamma R} FRFR.
\end{equation}
Define the map $FHR \xrightarrow{\psi} FR$ to be the 
composition 
$$FHR \xrightarrow{\phi} FRFR \xrightarrow{FR\trace - \trace FR} FR.$$
Note that any choice of the splitting $FHR\to FQ_1R$ in the definition of $\phi$ 
will produce the same map $\psi$, since the composition
$$FR \xrightarrow{F\action R} FRFR \xrightarrow{FR\trace - \trace FR}
FR$$
is zero.

\item (The monad condition) The following composition is an isomorphism:
\begin{align}
\label{eqn-the-monad-condition-map}
FHQ_{n-1} \xrightarrow{FH \iota_{n-1}} 
FHRF \xrightarrow{\psi F} FRF \xrightarrow{F\kappa} FC[1].
\end{align}
Here $C$ is the spherical cotwist of $F$ defined by an exact triangle
$$ C \rightarrow \id \xrightarrow{\action} RF \xrightarrow{\kappa}
C[1].$$ 
\item (The adjoints condition) The following composition is an isomorphism: 
\begin{align}
FR \xrightarrow{FR \action } FRFL \xrightarrow{\mu_n L} FH^n L. 
\end{align}

\item (The highest degree term condition) There is an isomorphism that makes the diagram commute:
\begin{equation*}
\begin{footnotesize}
\begin{tikzcd}[column sep="1.6cm"]
FHQ_{n-1}L
\ar{r}{FH\iota_n L}
\ar[equals]{d}
&
FHRFL
\ar{r}{\psi FL}
&
FRFL
\ar{r}{F\mu_nL}
&
FH^nL
\ar[dashed]{d}
\\
FHQ_{n-1}L
\ar{r}{FH\iota_nL}
&
FHRFL
\ar{r}{FHR\psi'}
&
FHRFH'L
\ar{r}{FH\mu_n H'L}
&
FHH^nH'L,
\end{tikzcd}
\end{footnotesize}
\end{equation*}
where $H'$ is the left adjoint of $H$ and 
$\psi'\colon FL \to FH'L$ is the left dual to 
$\psi\colon FHR \to FR$.
\end{enumerate}
\end{defn}

In the split case treated by Addington the objects 
$FHQ_{n-1}$ and $FC[1]$ are both isomorphic to 
$$ FH^n \oplus \dots \dots \oplus FH. $$ 
The map \eqref{eqn-the-monad-condition-map} 
is the image under $F$ of the 
left multiplication by $H$ in the $RF$ monad structure 
minus a strictly upper triangular matrix. 
Our monad condition asks for \eqref{eqn-the-monad-condition-map} 
to be invertible, while the one in  
\cite{Addington-NewDerivedSymmetriesOfSomeHyperkaehlerVarieties}
asks for the left multiplication by $H$ to be upper triangular with $\id$'s  
on the main diagonal. The precise non-split analogue of this would be 
requesting the map \eqref{eqn-the-monad-condition-map} to come from 
a one-sided map of twisted complexes whose vertical arrows are homotopy 
equivalences. This stronger condition implies our highest degree term condition
and 
implies that $RF \xrightarrow{R{\action}L} RFLF \xrightarrow{\mu_n LF} H^nLF$
is an isomorphism \cite[Lemmas 5.16 and 5.13]{AnnoLogvinenko-PFunctors}. 
That, in turn, means that the existence of any isomorphism $FR \simeq FH^nL$ 
implies our adjoints condition above
\cite[Prop.~5.14]{AnnoLogvinenko-PFunctors}. 
Thus, even the non-split analogue of the definition of a $\mathbb{P}^n$-functor 
in \cite{Addington-NewDerivedSymmetriesOfSomeHyperkaehlerVarieties} implies 
our definition. 

\begin{defn}
The \em $\mathbb{P}$-twist \rm $P_F$ of a $\mathbb{P}^n$-functor $F$ is
the unique convolution of the complex  
\begin{align}
FHR \xrightarrow{\psi} FR \xrightarrow{\trace} \id. 
\end{align}
\end{defn}

The uniqueness of the convolution is the main result of this paper, 
see Theorem \ref{theorem-uniqueness-of-FG-FR-Id-convolutions-dg-version}
and Theorem \ref{theorem-uniqueness-of-FG-FR-Id-convolutions-triangulated-version}. An upcoming paper \cite{AnnoLogvinenko-PFunctors}
proves that this $\mathbb{P}$-twist is indeed an autoequivalence of $\B$. 

\section{Uniqueness of $\mathbb{P}$-twists}
\subsection{An approach via triangulated categories}
\label{section-an-approach-via-triangulated-categories}
Let $Z$ and $X$ be separated schemes of finite type over a field $k$. 
We work with Fourier-Mukai kernels using the functorial notation: 
e.g. for any Fourier-Mukai kernels 
$F \in D(Z \times X)$ and $G \in D(X \times Z)$ of exact functors 
$D(Z) \xrightarrow{f} D(X)$ and $D(X) \xrightarrow{g} D(Z)$ 
we write $FG$ for the Fourier-Mukai kernel of $f \circ g$ given by the
standard Fourier-Mukai kernel composition:
$$ \pi_{13 *}(\pi_{12}^* G \ldertimes \pi_{23}^* F) \in D(X \times X). $$
Here
$\pi_{ij}$ are projections from $X \times Z \times X$ to 
the corresponding partial fiber products. We further 
write $\id_Z \in D(Z \times Z)$ and $\id_X \in D(X \times X)$ for 
the structure sheafs of the diagonals.

Let $F \in D(Z \times X)$ and $R \in D(X \times Z)$ be 
Fourier-Mukai kernels and let maps $FR \xrightarrow{\trace} \id_X$ and 
$\id_Z \xrightarrow{\action} RF$ define a $2$-categorical adjunction of
$F$ and $R$, i.e. the following compositions are identity maps:
$$ F \xrightarrow{F\action} FRF \xrightarrow{\trace F} F,$$
$$ R \xrightarrow{\action R} RFR \xrightarrow{R \trace } R.$$
In other words, consider adjoint exact functors 
$(f,r)\colon D(Z) \rightleftarrows D(X)$
with a fixed lift to $2$-categorically adjoint Fourier-Mukai kernels
$(F,R)$. Let $G$ be a Fourier-Mukai kernel of an 
exact functor $g: D(X) \rightarrow D(Z)$. 

\begin{theorem}
\label{theorem-uniqueness-of-FG-FR-Id-convolutions-triangulated-version}
For any $FG \xrightarrow{f} FR$ with $\trace \circ f = 0$
all convolutions of the following complex are isomorphic:
\begin{align}
\label{eqn-FG-FR-Id-complex}
FG \xrightarrow{f} FR \xrightarrow{\trace} \id_X.
\end{align}
\end{theorem}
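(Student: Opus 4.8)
The plan is to reduce everything to the uniqueness of the right Postnikov system, exactly as the paper announces, and to apply Lemma~\ref{lemma-uniqueness-of-convolutions-for-a-two-step-complex}. By that lemma, together with Lemma~\ref{lemma-left-Postnikov-system-for-each-right-Postnikov-system-and-vice-versa}, it suffices to show that the map
\begin{align}
\homm^{-1}_{D(X\times X)}(FG,FR) \xrightarrow{\;\trace\circ(-)\;} \homm^{-1}_{D(X\times X)}(FG,\id_X)
\end{align}
is surjective. First I would rewrite the target group using adjunction: since $(F,R)$ are $2$-categorically adjoint Fourier--Mukai kernels, $\homm_{D(X\times X)}(FG,\id_X) \simeq \homm_{D(Z\times X)}(G \circ (\text{something}), \dots)$ — more precisely I want to strip the leading $F$ off of $FG$ on the source side. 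The clean way: $F$ has a \emph{left} adjoint $L$ as well (it is a Fourier--Mukai functor between derived categories of finite-type schemes, and the paper's $\mathbb P^n$-functor setting always carries $L$; but in the bare Theorem~\ref{theorem-uniqueness-of-FG-FR-Id-convolutions-triangulated-version} only a right adjoint $R$ is assumed, so I must be careful). Re-reading the hypotheses: only $R$ is assumed. So instead I use that $\homm(FG,FR)$ receives a map from $\homm(G,RFR) \simeq \homm(G,R)$ via post-composition with $\trace$... no. Let me instead use the unit/counit directly: the relevant structure is the triangle $C \to \id_Z \xrightarrow{\action} RF \xrightarrow{\kappa} C[1]$, or rather I expand $FR$ using nothing and expand $\id_X$ using the counit triangle $FR \xrightarrow{\trace} \id_X \to \cone(\trace)\to$.

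**The key computation.** Apply $\homm^\bullet_{D(X\times X)}(FG,-)$ to the exact triangle $FR \xrightarrow{\trace}\id_X \to \cone(\trace)\to FR[1]$. Surjectivity of $\trace\circ(-)$ in degree $-1$ is implied by $\homm^{-1}(FG,\cone(\trace))=0$ being false in general; what we actually need is $\homm^{-1}(FG,\cone(\trace)) $ mapping to zero, i.e.\ it suffices that the connecting map $\homm^{-1}(FG,\cone(\trace))\to\homm^{0}(FG,FR)$ is injective, equivalently $\homm^{-1}(FG,\id_X)\to\homm^{-1}(FG,\cone(\trace))$ is zero, equivalently the earlier $\homm^{-1}(FG,FR)\to\homm^{-1}(FG,\id_X)$ is onto — which is circular. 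So the real content must be an \emph{explicit} splitting, not a vanishing: the hypothesis $\trace\circ f=0$ is not used for a vanishing argument but to pin down that $f$ factors through $\cone(\action F)[-1]$ or similar, and the adjunction identities $\trace F\circ F\action=\id_F$, $R\trace\circ\action R=\id_R$ give a concrete contracting homotopy. Concretely: given any $\xi\in\homm^{-1}(FG,\id_X)$, I would produce a preimage by hand using the retraction $F\action\colon F\to FRF$. Compose: $FG \xrightarrow{F\action G} FRFG \xrightarrow{FR(\text{use }\xi?)}$ — here $\xi\colon FG\to\id_X[-1]$ gives $RFG\to R[-1]$, hence $FRFG \to FR[-1]$, and precomposing with $F\action G$ gives a map $FG\to FR[-1]$; I then check $\trace\circ(\text{this})=\xi$ using $\trace F\circ F\action=\id_F$ (applied with a trailing $G$ and a shift). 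This is the standard "adjunction kills the obstruction" trick.

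**The main obstacle** I anticipate is bookkeeping the Fourier--Mukai kernel identities as genuine $2$-categorical statements: the composition $FR$ is defined via $\pi_{13*}(\pi_{12}^*\cdot \ldertimes \pi_{23}^*\cdot)$, and the maps $\trace,\action$ live in this world, so "$\trace F\circ F\action=\id$" has to be invoked at the level of kernels, and the candidate preimage $F\action G$ composed with the induced map from $\xi$ must be verified to compose correctly to $\xi$ after applying $\trace$ — this is a diagram chase in $D(X\times X)$ using only functoriality of the kernel composition and the two triangle identities, with no octahedral input. Once surjectivity is established, I invoke Lemma~\ref{lemma-uniqueness-of-convolutions-for-a-two-step-complex} to conclude all right Postnikov systems of \eqref{eqn-FG-FR-Id-complex} have isomorphic convolutions, then invoke Lemma~\ref{lemma-left-Postnikov-system-for-each-right-Postnikov-system-and-vice-versa} to extend the conclusion to left Postnikov systems, hence to all convolutions. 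I would also record the reduction that, by the first Remark after Lemma~\ref{lemma-uniqueness-of-convolutions-for-a-two-step-complex}, it would even suffice to show $\homm^{-1}(FG,\id_X)=0$ outright if that happens to hold in the $\mathbb P^n$-functor application (it does not in general, which is exactly why the refined surjectivity criterion is needed here).
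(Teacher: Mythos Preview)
Your proposal is correct and, after some exploratory detours, lands on exactly the paper's argument: reduce via Lemmas~\ref{lemma-left-Postnikov-system-for-each-right-Postnikov-system-and-vice-versa} and~\ref{lemma-uniqueness-of-convolutions-for-a-two-step-complex} to surjectivity of $\trace\circ(-)$, then for any $\phi\in\homm^{-1}(FG,\id_X)$ exhibit the explicit preimage $\psi = (FR\phi)\circ(F\action G)$ and verify $\trace\circ\psi=\phi$ using functoriality of kernel composition together with the triangle identity $(\trace F)\circ(F\action)=\id_F$. The bookkeeping you anticipate as the main obstacle is dispatched in the paper in a single line by invoking functoriality of Fourier--Mukai composition.
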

\begin{proof}
By Lemma
\ref{lemma-left-Postnikov-system-for-each-right-Postnikov-system-and-vice-versa}
it suffices to show that the convolutions of all right Postnikov
systems associated to  
\eqref{eqn-FG-FR-Id-complex}
are isomorphic, since for any left Postnikov system there
exists a right Postnikov system with an isomorphic convolution. Then  
by Lemma \ref{lemma-uniqueness-of-convolutions-for-a-two-step-complex}
it suffices to show that the natural map 
$$ 
\homm^{-1}_{D(X \times X)}(FG,FR) 
\xrightarrow{\trace \circ (-)}
\homm^{-1}_{D(X \times X)}(FG,\id_X) 
$$ 
is surjective. The idea is simple: by the $2$-categorical adjunction of
$F$ and $R$ it suffices to show the surjectivity of 
$$ 
\homm^{-1}_{D(X \times Z)}(G,RFR) 
\xrightarrow{R\trace \circ (-)}
\homm^{-1}_{D(X \times Z)}(G,R), 
$$ 
but this is trivial: $RFR \xrightarrow{R \trace} R$
has a right quasi-inverse $R \xrightarrow{\action R} RFR$.

Indeed, let $\phi \in \homm^{-1}_{D(X \times X)}(FG,\id_X)$ be any element.
Let 
$$\psi \in \homm^{-1}_{D(X \times X)}(FG,FR)$$
be the composition 
$$ FG \xrightarrow{F\action G} FRFG \xrightarrow{FR \phi} FR. $$
Then $\trace \circ \psi$ is the composition 
$$ FG \xrightarrow{F\action G} FRFG \xrightarrow{FR\phi} FR
\xrightarrow{\trace} \id_X.$$
Since the composition of Fourier-Mukai kernels is functorial 
the following two compositions are equal:
$$ FRFG \xrightarrow{FR\phi} FR \xrightarrow{\trace} \id_X \quad\text{ and }\quad 
 FRFG \xrightarrow{\trace FG} FG \xrightarrow{\phi} \id_X. $$
Thus $\trace \circ \psi$ equals the composition 
$$ FG \xrightarrow{F\action G} FRFG \xrightarrow{\trace FG} FG
\xrightarrow{\phi} \id_X$$
which is just $\phi$ since $(\trace FG) \circ (F\action G) = \id$. 
We conclude that $\trace \circ (-)$ is surjective as desired. 
\end{proof}

\subsection{An approach via DG-enhancements}
\label{section-an-approach-via-DG-enhancements}

Let $\A$ and $\B$ be two \em enhanced triangulated categories\rm. 
In other words, $\A$ and $\B$ are quasi-equivalence classes 
of pretriangulated $DG$-categories. 
The underlying triangulated categories are $H^0(\A)$ and $H^0(\B)$. 
Let $D(\AbimB)$ be the derived category of $\AbimB$-bimodules. 
The DG-enhanceable exact functors $H^0(\A) \rightarrow H^0(\B)$ are in 
one-to-one correspondence with the isomorphism classes in
$D^{\Bquasirep}(\AbimB)$, the full subcategory of $D(\AbimB)$ 
consisting of $\B$-quasi-representable bimodules
\cite{Toen-TheHomotopyTheoryOfDGCategoriesAndDerivedMoritaTheory}. 
If the underlying
triangulated categories are Karoubi-complete, we can use the Morita
framework where $\A$ and $\B$ are Morita equivalence classes of 
small DG-categories. The underlying triangulated categories are
the full subcategories $D_c(\A)$ and $D_c(\B)$ of the compact 
objects in $D(\A)$ and $D(\B)$. The DG-enhanceable exact functors are  
in one-to-one correspondence with the isomorphism classes in
$D^{\Bperf}(\AbimB)$, the full subcategory of $D(\AbimB)$ consisting
of $\B$-perfect bimodules
\cite{Toen-TheHomotopyTheoryOfDGCategoriesAndDerivedMoritaTheory}. 
Either way, this shows that to make
the results of this section applicable to any pair of adjoint
DG-enhanceable exact functors between two enhanced triangulated 
categories it suffices to work with homotopy adjoint DG-bimodules. 

Let $\A$ and $\B$ be two small DG categories. Let $\AmodbarB$,
$\BmodbarA$, $\AmodbarA$ and $\BmodbarB$ be the bar categories of 
$\AbimB$-, $\BbimA$-, $\AbimA$- and $\BbimB$-bimodules \cite{AnnoLogvinenko-BarCategoryOfModulesAndHomotopyAdjunctionForTensorFunctors}. 
These could be replaced by any other DG enhancements of 
the derived categories of bimodules equipped with (homotopy) 
unital tensor bifunctors 
$(-)\otimes_\A (-)$ and $(-)\otimes_\B (-)$ which descend to the bifunctors
$(-)\ldertimes_\A (-)$ and $(-)\ldertimes_\B (-)$ between 
the derived categories. For example, one can take $h$-projective or
$h$-injective enhancements. The advantage of bar categories is that 
any adjunction of DG-enhanceable functors can be lifted to 
a pair of homotopy adjoint bimodules described in the next
paragraph, cf. 
\cite[\S 5.2]{AnnoLogvinenko-BarCategoryOfModulesAndHomotopyAdjunctionForTensorFunctors}

Let $M \in \AmodbarB$ and $N \in \BmodbarA$ be 
\em homotopy adjoint\rm, that is --- there exist maps 
\begin{align*}
\trace: N\otimes_\A M \to \B
& \qquad \qquad
\action: \A \to M \otimes_\B N
\end{align*}
in $\AmodbarA$ and $\BmodbarB$ such that 
\begin{align}
\label{eqn-F-FRF-F-composition}
M \xrightarrow{\action \otimes \id} M \otimes_\B N \otimes_\A M 
\xrightarrow{\id \otimes \trace} M \\
\label{eqn-R-RFR-R-composition}
N \xrightarrow{\id \otimes \action} N \otimes_\A M \otimes_\B N
\xrightarrow{\trace \otimes \id} N
\end{align}
are homotopic to $\id_M$ and $\id_N$. Thus 
there exists a degree $-1$ map $\zeta: M \to M$ such that 
$\eqref{eqn-F-FRF-F-composition} = \id_M + d\zeta$.
  
Let $X \in \BmodbarA$ and let $X\otimes_\A M \xrightarrow{f} N\otimes_\A M$ 
be any map such that the following is a differential complex in
$D(\BbimB) \simeq H^0(\BmodbarB)$:
\begin{equation}
\label{eqn-ordinary-complex-XM-NM-B}
\begin{tikzcd}[column sep={3cm}]
X\otimes_\A M
\ar{r}{f} 
& 
N\otimes_\A M
\ar{r}{\trace} 
&
\B.
\end{tikzcd}
\end{equation}

\begin{prps}
\label{prps-any-two-lifts-of-XM-NM-B-are-isomorphic}
Any two lifts of \eqref{eqn-ordinary-complex-XM-NM-B} to a twisted
complex over $\BmodbarB$ are homotopy equivalent. 
\end{prps}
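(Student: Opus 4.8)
The plan is to show that any two lifts of \eqref{eqn-ordinary-complex-XM-NM-B} are in fact isomorphic as twisted complexes, via a morphism that is the identity on each of the three objects; since a homotopy equivalence is all that is asked, this is more than enough. First I would observe that, as the objects $X\otimes_\A M$, $N\otimes_\A M$, $\B$ and the closed degree-zero maps $f$, $\trace$ are fixed by \eqref{eqn-ordinary-complex-XM-NM-B}, a lift of \eqref{eqn-ordinary-complex-XM-NM-B} to a three-term twisted complex over $\BmodbarB$ is precisely a choice of degree $-1$ map $x\colon X\otimes_\A M \to \B$ in $\BmodbarB$ with $dx = -\trace\circ f$; at least one such $x$ exists because \eqref{eqn-ordinary-complex-XM-NM-B} is a differential complex in $H^0(\BmodbarB)$. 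Given two lifts with connecting maps $x_1$ and $x_2$, the difference $y := x_1 - x_2$ is closed of degree $-1$.

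To build the isomorphism I would look for a closed degree-zero morphism of twisted complexes $\Phi = \id + N$ from the second lift to the first, where the only nonzero components of $N$ are a degree $-1$ map $\alpha\colon X\otimes_\A M \to N\otimes_\A M$ and a degree $-2$ map $\sigma\colon X\otimes_\A M \to \B$ (the ``long'' edge); since $N$ then has square zero, $\Phi$ is automatically invertible and hence a homotopy equivalence. Writing the closedness equation $d(\Phi) + q_1\Phi - \Phi q_2 = 0$ component by component, the short-edge components force $\alpha$ to be closed, while the long-edge component collapses to the single equation
\[
d(\sigma) \;=\; y \;+\; \trace\circ\alpha .
\]
So it is enough to find a closed degree $-1$ map $\alpha$ with $\trace\circ\alpha$ cohomologous to $-y$; equivalently, it is enough that
\[
\trace\circ(-)\colon\; \homm^{-1}_{D(\BbimB)}\bigl(X\otimes_\A M,\, N\otimes_\A M\bigr) \;\longrightarrow\; \homm^{-1}_{D(\BbimB)}\bigl(X\otimes_\A M,\, \B\bigr)
\]
be surjective.

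This last surjectivity is the DG incarnation of the computation at the heart of the proof of Theorem~\ref{theorem-uniqueness-of-FG-FR-Id-convolutions-triangulated-version}. Under the homotopy adjunction $(-)\otimes_\A M \dashv (-)\otimes_\B N$ of bar categories from \cite{AnnoLogvinenko-BarCategoryOfModulesAndHomotopyAdjunctionForTensorFunctors}, post-composition with $\trace\colon N\otimes_\A M\to\B$ corresponds to post-composition with $\trace\otimes_\B\id_N\colon N\otimes_\A M\otimes_\B N \to N$, and \eqref{eqn-R-RFR-R-composition} exhibits $\id_N\otimes\action$ as a homotopy section of this map; hence $\trace\otimes_\B\id_N$ induces a split epimorphism on $\homm^\bullet_{D(\BbimA)}(X,-)$, and transporting the conclusion back along the adjunction gives that $\trace\circ(-)$ is surjective on all of $\homm^\bullet_{D(\BbimB)}(X\otimes_\A M,-)$, in particular in degree $-1$. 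Equivalently, one can bypass the adjunction isomorphism and repeat the argument of Theorem~\ref{theorem-uniqueness-of-FG-FR-Id-convolutions-triangulated-version} verbatim, constructing the preimage of a given $\phi$ as the obvious composite built from $\action$ and $\phi$ and invoking functoriality of the tensor product together with $(\trace\otimes\id)\circ(\action\otimes\id)\simeq\id$.

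The genuinely new content over the triangulated case is slight: everything reduces to the same cohomological surjectivity statement. I expect the only real work to be organisational --- pinning down the signs and the precise component-wise form of the twisted-complex chain-map equation in the reduction step, and matching the bar-category bimodules $M$, $N$, $X$ with the functors $F$, $R$, ``$FG$'' so that the translation of the triangulated argument is clean. The fact that the DG adjunction is only a homotopy adjunction is harmless here, since all the identities we use are needed only up to homotopy, i.e. only after passing to $H^0$.
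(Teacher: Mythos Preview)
Your proposal is correct and follows essentially the same approach as the paper: after reducing to a choice of the degree $-1$ map $x$, you build an isomorphism of twisted complexes $\id + (\alpha,\sigma)$ and reduce existence of $\alpha$ to surjectivity of $\trace\circ(-)$ via the adjunction. The paper's proof is only cosmetically more explicit --- it writes down your $\alpha$ directly as $\xi = ((h_1-h_2)\otimes\id)\circ(\id\otimes\action\otimes\id)$ and names the homotopy $\eta$ witnessing $\trace\circ\xi \sim h_1 - h_2$ --- which is precisely the ``obvious composite built from $\action$ and $\phi$'' you allude to at the end.
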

\begin{proof}
Any lift of \eqref{eqn-ordinary-complex-XM-NM-B} in $D(\BbimB)$ 
to a twisted complex over $\BmodbarB$ is readily seen to be homotopy 
equivalent to a one which lifts $f$ to $f$ and $\trace$ to $\trace$.
The latter is simply a choice of the degree $-1$ map 
$h\colon X\otimes_\A M \rightarrow \B$ with $\trace \circ f + dh = 0$. 
Let $h_1$ and $h_2$ be any two such maps.  
Define $\xi$ to be the composition
\begin{equation*}
X\otimes_\A M
\xrightarrow{\id\otimes\action\otimes\id}
X\otimes_\A M \otimes_\B  N\otimes_\A M
\xrightarrow{(h_1-h_2)\otimes \id^{\otimes 2}}
 N\otimes_\A M.
\end{equation*}
Then $d\xi=0$. Consider the following diagram:
\begin{equation}
\label{eqn-XM-to-B-commutative-diagram}
\begin{tikzcd}[column sep={1.55cm}]
X \otimes_\A M
\ar{r}{\id \otimes \action \otimes \id}
\ar[']{rd}{\id}
&
X\otimes_\A M \otimes_\B  N\otimes_\A M
\ar{r}{(h_1-h_2)\otimes \id^{\otimes 2}}
\ar{d}{\id^{\otimes 2} \otimes \trace}
&
 N\otimes_\A M
\ar{d}{\trace}
\\
&
X \otimes_\A M
\ar{r}{h_1-h_2}
&
\B.
\end{tikzcd}
\end{equation}
It descends to a commutative diagram 
in $D(\BbimB)$, thus it commutes up to homotopy $\BmodbarB$. 
Let $\eta$ be the homotopy up to which it commutes, so that
$d\eta = \trace\circ\xi - h_1 + h_2$. Then the following
are two mutually inverse isomorphisms of twisted complexes:
\begin{equation*}
\begin{tikzcd}[column sep={2cm}]
X \otimes_\A M
\ar[equal]{d}
\ar[dashed, bend left=20]{rr}[']{h_1}
\ar{r}{f}
\ar[']{rd}[description]{\xi}
\ar{rrd}[description, pos=0.6]{-\eta}
&
N \otimes_\A M
\ar[equal]{d}
\ar{r}{\trace}
&
\B
\ar[equal]{d}
\\
X \otimes_\A M
\ar[dashed, bend right=20]{rr}{h_2}
\ar[']{r}{f}
&
N \otimes_\A M
\ar[']{r}{\trace}
&
\B
\end{tikzcd}
\end{equation*}
\begin{equation*}
\begin{tikzcd}[column sep={2cm}]
X \otimes_\A M
\ar[equal]{d}
\ar[dashed, bend left=20]{rr}[']{h_2}
\ar{r}{f}
\ar[']{rd}[description]{-\xi}
\ar{rrd}[description, pos=0.6]{\eta}
&
N \otimes_\A M
\ar[equal]{d}
\ar{r}{\trace}
&
\B
\ar[equal]{d}
\\
X \otimes_\A M
\ar[dashed, bend right=20]{rr}{h_1}
\ar[']{r}{f}
&
N \otimes_\A M
\ar[']{r}{\trace}
&
\B.
\end{tikzcd}
\end{equation*}
\end{proof}

\begin{theorem}
\label{theorem-uniqueness-of-FG-FR-Id-convolutions-dg-version}
Let $\A$ and $\B$ be enhanced triangulated categories. Let 
$F$ be an exact functor $\A \rightarrow \B$ with a right adjoint $R$. 
Let $\trace\colon FR \rightarrow \id_\B$ be the adjunction counit. 
Let $G$ be any exact functor $\B \rightarrow \A$ and  
$f\colon FG \rightarrow FR$ any natural transformation 
with $f \circ \trace = 0$. Finally, assume these are all
DG-enhanceable.

Then all convolutions of the following three-term complex are
isomorphic:
\begin{align}
\label{eqn-ordinary-complex-FG-FR-Id-enhanced-setting}
FG \xrightarrow{f} FR \xrightarrow{\trace} \id_\B. 
\end{align}
\end{theorem}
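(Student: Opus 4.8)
The plan is to reduce the statement, via the bimodule formalism recalled at the start of \S\ref{section-an-approach-via-DG-enhancements}, to Proposition~\ref{prps-any-two-lifts-of-XM-NM-B-are-isomorphic}, and then to translate homotopy equivalence of twisted complexes into isomorphism of convolutions using the calculus of \S\ref{section-twisted-complexes}. First I would replace $\A$ and $\B$ by small DG-categories representing their quasi-equivalence classes and form the bar categories $\AmodbarB$, $\BmodbarA$, $\BmodbarB$. By \cite{Toen-TheHomotopyTheoryOfDGCategoriesAndDerivedMoritaTheory}, in the quasi-representable framework (which needs no Karoubi-completeness assumption and hence applies to arbitrary enhanced triangulated categories), a DG-enhanceable exact functor is represented, up to homotopy equivalence of bimodules, by a quasi-representable bimodule, and tensoring over $\A$ or $\B$ models composition of functors. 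So I would pick $M\in\AmodbarB$ representing $F$ and $X\in\BmodbarA$ representing $G$; then $FG$ and $FR$ are modelled by $X\otimes_\A M$ and $N\otimes_\A M$ for a suitable $N\in\BmodbarA$ modelling $R$, and $\id_\B$ by the diagonal bimodule $\B$, all lying in $\BmodbarB$. The crucial point is that, using the homotopy-adjunction construction of \cite[\S 5.2]{AnnoLogvinenko-BarCategoryOfModulesAndHomotopyAdjunctionForTensorFunctors}, one can choose $N$ together with maps $\trace\colon N\otimes_\A M\to\B$ and $\action\colon\A\to M\otimes_\B N$ making $M$ and $N$ homotopy adjoint exactly as in the paragraph preceding Proposition~\ref{prps-any-two-lifts-of-XM-NM-B-are-isomorphic}, with $\trace$ representing the adjunction counit. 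Finally, since $H^0(\BmodbarB)\simeq D(\BbimB)$ and $X\otimes_\A M$, $N\otimes_\A M$ represent $FG$, $FR$, the natural transformation $f$ is realized by a closed degree-$0$ map of bimodules $X\otimes_\A M\xrightarrow{f}N\otimes_\A M$; since $\trace\circ f=0$ in $H^0(\B)$ it is a coboundary in $\BmodbarB$, so that \eqref{eqn-ordinary-complex-XM-NM-B} is a differential complex in $D(\BbimB)$ identified with \eqref{eqn-ordinary-complex-FG-FR-Id-enhanced-setting}. All hypotheses of Proposition~\ref{prps-any-two-lifts-of-XM-NM-B-are-isomorphic} are then in force.

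Next I would invoke the discussion of \S\ref{section-twisted-complexes} with $\mathcal C=\BmodbarB$, a pretriangulated enhancement of $D(\BbimB)$. Convolutions are a purely triangulated-categorical notion, so it suffices to enumerate them inside $D(\BbimB)$: by Lemma~\ref{lemma-any-Postnikov-system-is-isomorphic-to-an-induced-one} every right or left Postnikov system for the complex \eqref{eqn-ordinary-complex-FG-FR-Id-enhanced-setting} is, up to isomorphism, induced by some three-term twisted complex over $\BmodbarB$ lifting it; by Lemma~\ref{lemma-the-convolutions-of-induced-Postnikov-systems} the convolution of such a Postnikov system is isomorphic to the convolution of that twisted complex; and the convolution functor factors through $H^0(\pretriag(\BmodbarB))$, so homotopy-equivalent twisted complexes have isomorphic convolutions. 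Hence all convolutions of \eqref{eqn-ordinary-complex-FG-FR-Id-enhanced-setting} are isomorphic provided all its twisted-complex lifts are homotopy equivalent --- and the latter is exactly Proposition~\ref{prps-any-two-lifts-of-XM-NM-B-are-isomorphic} applied to $X\otimes_\A M\xrightarrow{f}N\otimes_\A M\xrightarrow{\trace}\B$. This would complete the argument.

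The hard part will be the compatibility bookkeeping in the first step: not merely producing bimodules that represent $F$, $R$, $G$ individually (routine by Toën), but arranging them so that the model $N$ of $R$ is genuinely homotopy adjoint to the model $M$ of $F$ with $\trace$ the given counit, and so that $f$ descends to an honest closed map between the chosen bimodules rather than only to a morphism in the derived category. This is precisely what the bar-category machinery of \cite{AnnoLogvinenko-BarCategoryOfModulesAndHomotopyAdjunctionForTensorFunctors} is designed to provide, together with \cite{Toen-TheHomotopyTheoryOfDGCategoriesAndDerivedMoritaTheory}; once those identifications are set up, everything after them, including the passage from twisted complexes to convolutions, is formal.
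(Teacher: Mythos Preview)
Your proposal is correct and follows essentially the same route as the paper's own proof: lift the functors to homotopy adjoint bimodules via the bar-category formalism, apply Proposition~\ref{prps-any-two-lifts-of-XM-NM-B-are-isomorphic} to conclude that all twisted-complex lifts of \eqref{eqn-ordinary-complex-FG-FR-Id-enhanced-setting} are homotopy equivalent, and then use Lemmas~\ref{lemma-the-convolutions-of-induced-Postnikov-systems} and~\ref{lemma-any-Postnikov-system-is-isomorphic-to-an-induced-one} to deduce that all convolutions are isomorphic. You spell out the lifting bookkeeping in more detail than the paper does, but the argument is the same.
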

\begin{proof}
As per the beginning of this section we can lift $F$ and $R$ to 
a pair of homotopy adjoint  bimodules $M$ and $N$ and 
we can lift $G$ to an bimodule $X$. Then by 
Prop.~\ref{prps-any-two-lifts-of-XM-NM-B-are-isomorphic}
any two lifts of \eqref{eqn-ordinary-complex-FG-FR-Id-enhanced-setting}
to a twisted complex are homotopy equivalent. By Lemmas 
\ref{lemma-the-convolutions-of-induced-Postnikov-systems}
and 
\ref{lemma-any-Postnikov-system-is-isomorphic-to-an-induced-one}
every convolution of \eqref{eqn-ordinary-complex-FG-FR-Id-enhanced-setting}
is isomorphic in $D(\BbimB)$ to the convolution of some twisted complex 
lifting it. It follows that all convolutions of 
\eqref{eqn-ordinary-complex-FG-FR-Id-enhanced-setting}
are isomorphic. 
\end{proof}

\bibliography{references}
\bibliographystyle{amsalpha}
\vspace{0.5cm}
\end{document}